\DeclareMathOperator{\dist}{dist}
\DeclareMathOperator{\supp}{supp}
\def\bbC{{\mathbb {C}}}
\def\bbR{{\mathbb {R}}}
\def\gc{\gtrsim}
\def\lc{\lesssim}
\theoremstyle{plain}
\newtheorem{theorem}{Theorem}[section]
\newtheorem{lemma}[theorem]{Lemma}
\newtheorem{prop}[theorem]{Proposition}
\newtheorem{corollary}[theorem]{Corollary}
\numberwithin{equation}{section}
\newtheorem{remark}[theorem]{Remark}
\theoremstyle{remark}
\newcommand{\bpar}{\big ( }
\newcommand{\parb}{\big ) }
\newcommand{\Bpar}{\Big ( }
\newcommand{\parB}{\Big ) }
\begin{document}
\date{\today}

\title
[Bilinear restriction estimates]
{Bilinear restriction estimates for surfaces of codimension bigger than one}

\author[J. Bak, J. Lee, S. Lee]
{Jong-Guk Bak, Jungjin Lee and Sanghyuk Lee}

\address {Department of Mathematics, Pohang University of Science and Technology, Pohang 37673, Republic of Korea}
\email{bak@postech.ac.kr}

\address{Department of Mathematical Sciences, School of Natural Science, Ulsan National Institute of Science and Technology, Ulsan 44919, Republic of Korea}
\email{jungjinlee@unist.ac.kr}

\address{Department of Mathematical Sciences, Seoul National University, Seoul 08826, Republic of Korea}
\email{shklee@snu.ac.kr}

\subjclass[2010]{42B15, 42B20}

\keywords{Fourier {transform} of measures, complex surfaces, Fourier restriction estimates}


\begin{abstract} In connection with the restriction problem in $\bbR^n$ for hypersurfaces including the sphere and paraboloid, the bilinear (adjoint) restriction estimates have been extensively studied.
However, not much is known about such estimates for surfaces with codimension (and dimension) larger than one. In this paper we show sharp bilinear $L^2 \times L^2 \rightarrow L^q$ restriction estimates for general surfaces of higher codimension. In some special cases, we can apply these results to obtain the corresponding linear estimates.
\end{abstract}

\maketitle

\section{Introduction and statement of results}
\label{intro}

For a smooth hypersurface $S$ such as the sphere or paraboloid in $\bbR^n$, $n\ge 3$, the $L^p$-$L^q$ boundedness of the (adjoint) restriction operator (or the extension operator) $\widehat{f d\sigma}$ {has} been extensively studied since the late 1960s. Here $d\sigma$ denotes the induced Lebesgue measure on $S$. Especially, when $S$ is the sphere, it is conjectured by E. M. Stein (cf. \cite{St}) that $\widehat{f d\sigma}$ should map $L^p (S)$ boundedly to $L^q (\bbR^n)$, precisely when $q \ge \frac{n+1}{n-1} p'$ and $q> \frac{2n}{n-1}$. Since then, a large amount of literature has been devoted to this problem. Over the last couple of decades, the bilinear and multilinear approaches have proven to be quite effective, and substantial progress has been made through those approaches. We refer the reader to \cites{BCT, BG, G} for the most recent developments.

On the other hand, when the dimension of the manifold is one, namely, when the associated surface is a curve, the restriction estimate is by now fairly well understood {\cites{BOS, BOS1, BOS2, Sto}}.

However, not much is known about the intermediate cases, namely, when the codimension $k$ of the manifold is {between} $1$ and $n-1$.  The restriction problem for quadratic surfaces of codimension $k\ge 2$ was first studied by Christ \cite{Ch1} and Mockenhaupt \cite{Moc}. They also considered  the  problem in a more general setting and found some necessary conditions {on the curvature} and codimension of the surface. For some surfaces they also established the  optimal $L^2 \rightarrow L^q$ linear estimates, which may be regarded as generalizations of the Stein-Tomas restriction theorem (see also \cite{B}). Although there are some known cases in which the $L^p$-$L^q$  boundedness is completely characterized (see for example \cites{BH, BL, ObR0}), for most surfaces with codimension bigger than one, the current state of the restriction problem is hardly beyond that of the Stein-Tomas theorem.

In this paper, we are concerned with restriction estimates for surfaces of codimension $k \ge 2$.
To be more specific, let us set $k \ge 1$ and  $I=[-1,1]$. Let $\Phi: I^d \rightarrow \mathbb R^k$ be a smooth function given by
\[
\Phi(\xi) = (\varphi_1(\xi), \varphi_2(\xi),\cdots, \varphi_k(\xi)).
\]
The adjoint restriction operator (the extension  operator) $E=E_\Phi$ for the surface
\(
(\xi, \Phi(\xi)) \in \mathbb R^d \times \mathbb R^k
\)
is defined by
\[
Ef(x,t) = \int_{I^d} e^{2 \pi i(x \cdot \xi + t\cdot \Phi(\xi))} f(\xi) d\xi,  \quad (x,t)\in  \mathbb R^d \times \mathbb R^k.
\]
{{}Specific examples of such operators  with $2\le k\le d-2$ can be found in \cites{BH, BL, Ch1, Moc, ObR0}. (Also, see Section 5.) }

There are some classes of surfaces for which  the optimal $L^2$-$L^q$ boundedness of $E$ is well understood.
In fact, using a Knapp type example  it is easy to see that $E$ may be bounded from $L^p$ to
$L^q$ only if $\frac{d+2k}{q}\le d\big(1-\frac1p\big)$. Hence, the best possible $L^2$-$L^q$ bound is that for $q = \frac{2(d+2k)}{d}$.
Christ \cite{Ch1} and Mockenhaupt \cite{Moc} showed that this is true for a class of surfaces satisfying a suitable curvature condition. In particular, let $M$ be a linear map from $\mathbb R^k$ to the space of $d\times d$ symmetric matrices and suppose that
$\int_{S^{k-1}} |\det M(t)|^{-\gamma} d\sigma(t)<\infty$
for $\gamma=\frac kd$. Then it was proven in \cite{Moc} that the extension operator $E$ defined by $\Phi=\xi^tM(t)\xi$
is bounded from  $L^2$ to $L^{\frac{2(d+2k)}{d}}$.

In order to obtain estimates for some $q<\frac{2(d+2k)}{d}$ and $p>2$, it seems necessary to consider methods other than the $TT^*$ argument which solely relies on the decay estimate for the Fourier transform of the surface measure. For this reason we wish to consider the bilinear restriction estimates for surfaces of codimension greater than 1 and try to obtain the best possible estimates.

\smallskip

Let $S_1,$ $S_2$ be closed cubes contained in $I^d$ and define
\[
E_if(x,t) = \int_{S_i} e^{2 \pi i(x \cdot \xi + t\cdot \Phi(\xi))} f(\xi) d\xi, \quad i=1,2.
\]
Let us consider the estimate
\begin{equation} \label{bilest}
\| E_1f \, E_2g \|_{L^q(\mathbb R^{d+k})}
\le C \|f\|_{L^p(\mathbb R^d)}\|g\|_{L^p(\mathbb R^{d})}.
\end{equation}

For the elliptic surfaces, bilinear estimates can be thought of {as a} generalization of linear estimates, since a linear restriction estimate follows from the corresponding bilinear one by an argument involving a Whitney decomposition. (See e.g. \cites{TVV}.)
The advantage of the bilinear estimates is that a wider rage of boundedness is possible than for the linear estimate, provided that a separation condition holds between the supports of the {functions $f$, $g$}. 
For surfaces with codimension 1, the sharp bilinear (adjoint) restriction estimate for the cone  was obtained by Wolff \cite{W3}, and for the paraboloid the corresponding estimate was proved by Tao \cite{T}. The bilinear approach has also been  applied to the restriction problem for hyperbolic surfaces: for the saddle surface in $\bbR^3$, Vargas \cite{V} and, independently,  Lee \cite{L} proved the bilinear  estimate by extending Tao's method.\footnote{For more general negatively curved surfaces in $\mathbb R^3$ and higher dimensions, Lee \cite{L} showed the bilinear restriction estimates.  However, in higher dimensions the linear estimate could not be deduced from the bilinear one, because the separation condition needed to prove the bilinear estimate for hyperbolic surfaces was more complex than that for the elliptic surfaces.} From these bilinear restriction estimates the corresponding linear ones have been obtained as well.

\smallskip
In order to state our results, we first introduce some notations.
For $\nu_1$, $\nu_2\in I^d$, we
define the $k\times d$ matrix $ \mathbf D(\nu_1,\nu_2)$ by
\[
\mathbf D(\nu_1,\nu_2)=
\begin{pmatrix}
\nabla \varphi_1(\nu_2) -  \nabla \varphi_1(\nu_1)\\
\vdots\\
\nabla \varphi_k(\nu_2) -  \nabla \varphi_k(\nu_1)
\end{pmatrix}.
\]
Here $\nabla\varphi_j$ is a row vector.
Let $H\varphi$ denote the Hessian of $\varphi$ and  $\mathbf D^t(\nu_1,\nu_2)$ be the transpose of $\mathbf D(\nu_1,\nu_2)$.
The following is our main theorem.

\begin{theorem} \label{thm}
Let $t=(t_1, \cdots ,t_k)$, $k \ge 1$.
Suppose that, for $\nu\in S_1\cup S_2 $ and  $|t|= 1$,
\begin{equation}
\label{hessianinvertible}
\det \Bpar \sum_{i=1}^k t_i H\varphi_i(\nu)\parB \neq 0
\end{equation}
and,  for $\nu_1 \in S_1$, $\nu_2 \in S_2$,  $|t|=1$ and for $\nu=\nu_1, \nu_2$,
\begin{equation}\label{hessiannonvanishing}
\det \Big[ \mathbf D(\nu_1,\nu_2)  \Bpar  \sum_{j=1}^k t_j H\varphi_j(\nu)
\parB^{-1}  \mathbf D^t(\nu_1, \nu_2)\Big] \neq 0.
\end{equation}
Then, for $q > \frac{d+3k}{d+k}$ and $\frac{1}{p}+\frac{d+3k}{d+k}\frac{1}{2q} < 1$,  the estimate \eqref{bilest} holds.
\end{theorem}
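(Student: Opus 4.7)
The plan is to adapt the induction-on-scales framework of Tao \cite{T} for bilinear restriction to the codimension-$k$ setting. In this framework, the invertible combination $\sum_j t_j H\varphi_j$ from \eqref{hessianinvertible} plays the role of the single Hessian in the hypersurface case, and \eqref{hessiannonvanishing} provides the quantitative transversality between the wave-packet slabs associated to $S_1$ and $S_2$. By a standard $\epsilon$-removal argument together with interpolation against trivial $L^\infty$ bounds, the essential task reduces to proving the local $L^2$-based estimate
\[
\|E_1 f \, E_2 g\|_{L^q(Q_R)} \lesssim R^{C\epsilon}\|f\|_2\|g\|_2
\]
on cubes $Q_R$ of side $R \gg 1$, for every $q > \frac{d+3k}{d+k}$; interpolation with trivial bounds will then recover the full range of $p$. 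After an affine change of variables in $t$ and smooth cutoffs, I assume $S_1, S_2$ are small separated cubes on which \eqref{hessianinvertible} and \eqref{hessiannonvanishing} hold uniformly. A preliminary bilinear estimate $\|E_1 f \, E_2 g\|_{L^2(\bbR^{d+k})} \lesssim \|f\|_2 \|g\|_2$ is obtained via Plancherel: the left-hand side equals $\|\mu_1 * \mu_2\|_{L^2}$, where $\mu_i$ is the lifted measure corresponding to $f_i\,d\xi$, and $\mu_1 * \mu_2$ is the pushforward of $f \otimes g$ under the map $(\xi_1,\xi_2) \mapsto (\xi_1+\xi_2, \Phi(\xi_1)+\Phi(\xi_2))$, whose $(d-k)$-dimensional fibres are controlled using the maximal rank of $\mathbf{D}(\xi_1,\xi_2)$ afforded by \eqref{hessiannonvanishing}.

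I then decompose $E_i f = \sum_{T \in \mathcal{T}_i} c_T \phi_T$ at scale $R$, where each $\phi_T$ is associated to an $R^{-1/2}$-cap of $S_i$ centred at some $\xi_T$ and is spatially concentrated on a slab
\[
T = \bigl\{(x, t) \in Q_R : |x + \textstyle\sum_j t_j \nabla\varphi_j(\xi_T)| \lesssim R^{1/2} \bigr\}
\]
of width $R^{1/2}$ in $d$ transverse directions and extent $R$ in $k$ longitudinal directions. Condition \eqref{hessiannonvanishing} controls the transversality between slabs from $\mathcal{T}_1$ and $\mathcal{T}_2$, yielding uniform bounds on the overlap of any pair $(T_1, T_2)$ in terms of the determinant appearing in \eqref{hessiannonvanishing}. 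With this transversality in hand, I run Tao's induction-on-scales scheme: subdivide $Q_R$ into sub-cubes at an intermediate scale $r \ll R$, sort wave packets by sub-cube and direction, apply a parabolic rescaling (under which \eqref{hessianinvertible} and \eqref{hessiannonvanishing} are preserved), and close the induction by combining the induction hypothesis at scale $r$ with the $L^2$ base bound and a trivial $L^\infty$ bound via bilinear interpolation.

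The principal technical difficulty is the parabolic rescaling in the induction step, which must simultaneously normalize the entire $k$-parameter family of Hessians $\{\sum_j t_j H\varphi_j\}_{|t|=1}$; the uniformity in $|t|=1$ afforded by \eqref{hessianinvertible} and \eqref{hessiannonvanishing} is precisely what allows the rescaling to preserve the hypotheses at every stage and thereby close the induction. A secondary obstacle is the wave-packet geometry itself: unlike in codimension one, the slabs are $k$-dimensional in their longitudinal directions, so the combinatorial counting of slab intersections and the almost-orthogonality estimates driving the induction must be adapted to this thicker longitudinal geometry, and it is in these estimates that the refined form of \eqref{hessiannonvanishing} (involving $\mathbf{D}$ sandwiched against the inverse Hessian) plays its essential role.
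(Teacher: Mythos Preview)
Your overall architecture---$\epsilon$-removal, reduction to a local $L^2\times L^2\to L^{p_0}$ estimate with $p_0=\frac{d+3k}{d+k}$, wave-packet decomposition into $k$-codimensional slabs, and an induction on scales in the style of Tao~\cite{T}---matches the paper. The preliminary bilinear $L^2$ bound you describe is essentially Proposition~\ref{prop:endEst}.

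There is, however, a genuine gap in the induction step. You make parabolic rescaling the ``principal technical difficulty'' and assert that \eqref{hessianinvertible}, \eqref{hessiannonvanishing} are preserved under it. For a general smooth $\Phi$ (not quadratic) there is no parabolic rescaling that returns you to the same class of surfaces, so this step as written does not close. The paper, following Lee~\cite{L}, sidesteps rescaling entirely: the induction is a direct bootstrap (Proposition~\ref{prop}) in which one assumes the local estimate \eqref{mainbi} at scale $R$ with loss $R^\alpha$ and proves it with loss $R^{\max(\alpha(1-\delta),C\delta)+\epsilon}$. One covers $Q_R$ by sub-cubes $B$ of side $R^{1-\delta}$, defines a relation $\omega\sim B$ via a maximizing cube $B^*(\omega)$, and applies the induction hypothesis \emph{directly at scale $R^{1-\delta}$} (no rescaling) to the local piece $\sum_{\omega_1\sim B}\sum_{\omega_2\sim B}$.

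The non-local piece $\sum_{\omega_1\nsim B}$ is then controlled by an $L^2$ estimate, and this is where \eqref{hessiannonvanishing} actually enters---not, as you suggest, merely through pairwise slab transversality. The precise mechanism is Lemma~\ref{platetrans}: after expanding $\|{\sum P_{\omega_1}\sum P_{\omega_2}}\|_2^2$ and using Fourier support, the wave packets $\omega_1'$ that interact with a fixed $(\omega_1,\omega_2')$ have directions $\nu_1'$ constrained to the $(d-k)$-surface $\Pi_1^{\nu_1,\nu_2'}$ of \eqref{piDef}, and one must show that the resulting conic set $\Gamma_1^{\nu_1,\nu_2'}(R)$ meets any opposite slab $\pi_{\nu_2}$ in a set of diameter $O(R^{1/2+C\delta})$. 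The computation reducing this transversality to the nonvanishing of $\det[\mathbf D(\sum t_jH\varphi_j)^{-1}\mathbf D^t]$ is the heart of the argument, and your proposal does not isolate it.
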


As special cases of Theorem \ref{thm}, one can deduce the known bilinear restriction theorems for the elliptic surfaces in \cite{T}
and the negatively curved ones in \cites{V,L}.

Let us set
\[
\mathbf M(t, \nu_1,\nu_2, \nu) : =
\begin{pmatrix}
0 & \mathbf D(\nu_1,\nu_2)\\
\mathbf D^t(\nu_1, \nu_2) & \sum_{i=1}^k t_i H\varphi_i(\nu) \end{pmatrix}.
\]
Assuming the condition \eqref{hessianinvertible}, it is easy to see  that \eqref{hessiannonvanishing} is equivalent to
\begin{equation}
\label{Mcondition}
\det \mathbf M(t, \nu_1, \nu_2,  \nu)\neq 0
\end{equation}
for $\nu_1 \in S_1$, $\nu_2 \in S_2$,  $|t|=1$ and for $\nu=\nu_1, \nu_2$.\footnote{One can use the block matrix formula $ \det \begin{pmatrix} A & B \\ C & D \end{pmatrix} = \det(D) \det(A-BD^{-1}C) $.}
The condition \eqref{Mcondition} may seem rather complicated, but such a condition appears naturally when one considers the bilinear $L^2\times L^2\to L^2$ estimate. When $k=1$, it is closely  related to the ``rotational curvature''. (See \cite{L} for more details.)
The necessity of the condition \eqref{Mcondition} will become clear in the course of the proof of Proposition \ref{prop:endEst} below.

From the condition  \eqref{hessiannonvanishing} it follows that the matrix $\mathbf D(\nu_1,\nu_2)$ has rank $k$. So, the vectors $\{\nabla \varphi_i(\nu_2) -  \nabla \varphi_i(\nu_1): i=1,\dots, k\}$
are linearly independent. This means $d\ge k$. If $d=k$, then \eqref{Mcondition} implies \eqref{hessiannonvanishing}, but otherwise \eqref{Mcondition} may hold without \eqref{hessiannonvanishing} being satisfied.

\

In fact, it is possible to obtain a local version (Theorem \ref{cor2} below) of Theorem \ref{thm},
which holds under a weaker assumption. Let $\mathbf n_1, \dots \mathbf n_{d-k}$\footnote{We consider these vectors as row vectors.} be orthonormal vectors which are perpendicular  to the span of $\{\nabla \varphi_i(\nu_2) -  \nabla \varphi_i(\nu_1): i=1,\dots, k\}$ and set
\[ \mathbf N(\nu_2, \nu_1)=\begin{pmatrix} \mathbf n_1\\ \vdots  \\ \mathbf n_{d-k} \end{pmatrix}.\]
Then  we can  replace the condition \eqref{Mcondition} with
\begin{equation}
\label{normalcondition}
\det  \Big[ \mathbf N(\nu_2, \nu_1)\big( \sum_{i=1}^k t_i H\varphi_i(\nu)\big) \mathbf N^t(\nu_2, \nu_1)\Big]\, \neq 0
\end{equation}
whenever $\nu_1 \in S_1$, $\nu_2 \in S_2$,  $|t|=1$ and $\nu=\nu_1, \nu_2$.  It is  easy to see  that the value of this determinant is independent of the particular choice of  orthonormal vectors $\mathbf n_1,\dots, \mathbf n_{d-k}$, and that the condition \eqref{normalcondition}  is equivalent to \eqref{Mcondition} under the assumption
\eqref{hessianinvertible}.\footnote{Indeed, if $H$, $N$, $D$ are matrices of size $d\times d$, $(d-k)\times d$, $k\times d$, respectively, such that  $ND^t=0$, $\det H\neq 0$, and  rank$\begin{pmatrix}  N^t\, D^t\end{pmatrix}=d$, then    $\det ( NHN^t)\neq 0$ if and only if  $\det (DH^{-1} D^t)\neq 0$ because  $\begin{pmatrix} NH\\ D \end{pmatrix}\begin{pmatrix} N^t\,  D^t \end{pmatrix}=\begin{pmatrix} NHN^t & NHD^t\\ 0 & DD^t \end{pmatrix}$, and  $\begin{pmatrix} N\\ DH^{-1}  \end{pmatrix} \begin{pmatrix} N^t\, D^t \end{pmatrix}=\begin{pmatrix} NN^t & 0\\  DH^{-1}N^{t} & DH^{-1}D^t \end{pmatrix} $.}
If we have \eqref{normalcondition} instead of \eqref{hessiannonvanishing}, then we don't need \eqref{hessianinvertible} to get \eqref{mainbi} for any $\alpha>0$.  More precisely,  we have

\begin{theorem}\label{cor2} Suppose that, for any $\nu_1\in S_1$,  $\nu_2\in S_2$, the vectors $\nabla \varphi_i(\nu_2) -  \nabla \varphi_i(\nu_1),$ $ i=1,\dots, k$, are linearly independent and that \eqref{normalcondition} holds for $\nu_1 \in S_1$, $\nu_2 \in S_2$,  $|t|=1$ and for $\nu=\nu_1, \nu_2$. Then,  for any $\alpha>0$, there is a constant $C_\alpha$ such that
\begin{equation} \label{mainbi}
\| E_1f \, E_2g \|_{L^\frac{d+3k}{d+k}(Q_R)} \le C_\alpha R^\alpha \|f\|_{2}\|g\|_{2},
\end{equation}
where $Q_R$ is a cube of sidelength $R \gg 1$.
\end{theorem}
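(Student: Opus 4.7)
The plan is to deduce Theorem \ref{cor2} from a localized bilinear argument in the spirit of Tao's induction on scales for the paraboloid, adapted to higher codimension as in the proof of Theorem \ref{thm}. The key algebraic observation is that condition \eqref{normalcondition} alone already forces the non-degeneracy \eqref{Mcondition} on each sufficiently small pair of subcubes, even without \eqref{hessianinvertible}; this suffices for the $L^2\times L^2\to L^2$ kernel step, while the absence of \eqref{hessianinvertible} is absorbed into the $R^\alpha$ loss at the parabolic-rescaling step.

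First I would establish the following block-matrix identity. On any pair of subcubes $(\tau_1,\tau_2)\subset S_1\times S_2$ with centers $\bar\nu_1,\bar\nu_2$, apply an orthogonal change of variables in $\xi$ that rotates the rows of $\mathbf D(\bar\nu_1,\bar\nu_2)$ onto the last $k$ coordinate directions (with a compensating linear change in $(x,t)$ to preserve the extension-operator form). In the new coordinates, $\mathbf D\approx(0_{k\times(d-k)},\,D_0)$ on $\tau_1\times\tau_2$ with $D_0$ a $k\times k$ invertible block, and writing $H=\sum_i t_i H\varphi_i=\begin{pmatrix} A & B \\ B^t & C \end{pmatrix}$ with $A$ the top-left $(d-k)\times(d-k)$ block, a direct block computation (permute the last two block-rows and block-columns, then use $\det\begin{pmatrix} 0 & D_0 \\ D_0^t & \ast\end{pmatrix}=(-1)^{k^2}(\det D_0)^2$) gives
\[
\det \mathbf M \;=\; (-1)^{k^2}\,(\det D_0)^2\,\det A
\]
regardless of whether $H$ itself is invertible. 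Since \eqref{normalcondition} reads exactly $\det A\neq 0$ in the new coordinates, it forces \eqref{Mcondition} uniformly on $\tau_1\times\tau_2$, with a lower bound that is uniform once the cubes are fixed.

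With this identity at hand, I would run the argument of Theorem \ref{thm} locally. Fix $\alpha>0$ and pick $\delta=R^{-\eta}$ with $\eta$ small. Decompose $S_i=\bigsqcup_{\tau_i}\tau_i$ into subcubes of sidelength $\delta$, split $f=\sum_{\tau_1}f_{\tau_1}$, $g=\sum_{\tau_2}g_{\tau_2}$, and apply the change of variables above on each pair $(\tau_1,\tau_2)$. The $L^2\times L^2\to L^2$ estimate (via Plancherel and a Jacobian computation for the convolution of the two surface measures) then follows from the non-degeneracy of $\mathbf M$ just obtained. Combine this with a wave-packet decomposition, parabolic rescaling performed only in the normal directions of $\mathbf D$ (where \eqref{normalcondition} provides the needed curvature), and Tao-style induction on scales to bound each $\|E_1 f_{\tau_1}\cdot E_2 g_{\tau_2}\|_{L^{(d+3k)/(d+k)}(Q_R)}$. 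Summing over pairs using the $\delta$-separation almost-orthogonality and choosing $\eta$ small enough in terms of $\alpha$ yields \eqref{mainbi}.

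The main obstacle is the parabolic-rescaling step. In Theorem \ref{thm}, full Hessian non-degeneracy \eqref{hessianinvertible} allows a clean scaling that keeps the rescaled surface in a bounded family and yields the sharp off-endpoint range. Under only \eqref{normalcondition}, the Hessian can be singular along the $\mathbf D$-directions, so standard parabolic rescaling fails. The workaround is the partial-rescaling strategy indicated above: rescale parabolically in the $(d-k)$ normal directions where curvature is available, and leave the $\mathbf D$-directions inert, treating their contribution via the already-controlled Step-1 kernel bound. This produces a loss polynomial in $\delta^{-1}$ which the $R^\alpha$ tolerance absorbs, but cannot be sharpened to the off-endpoint range of Theorem \ref{thm}; in that sense \eqref{hessianinvertible} is genuinely needed for the stronger result, consistent with the remark preceding the theorem.
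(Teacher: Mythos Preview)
Your block-matrix identity is correct and useful: after rotating so that $\mathbf D=(0,D_0)$ with $D_0$ invertible, one indeed has $\det\mathbf M=(-1)^k(\det D_0)^2\det A$ where $A=\mathbf N H\mathbf N^t$, so \eqref{normalcondition} together with $\operatorname{rank}\mathbf D=k$ does force \eqref{Mcondition} without any assumption on $H$. But from that point on your roadmap diverges from what is actually needed, and the strategy you outline has a real gap.

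The proof of Theorem \ref{thm} in Section 4 contains no parabolic rescaling of the surface. Proposition \ref{prop} is an induction on the \emph{spatial} scale: one covers $Q_R$ by cubes $B$ of sidelength $R^{1-\delta}$ and invokes the inductive hypothesis \eqref{mainbi} directly on each $B$; nothing about the surface changes, and no ``bounded family'' issue arises. Consequently your ``main obstacle'' (a rescaling that requires full Hessian non-degeneracy) and your proposed ``partial-rescaling in normal directions'' address a problem that is not present. Likewise the ``$L^2\times L^2\to L^2$ via Plancherel and a Jacobian'' estimate you invoke is the content of Proposition \ref{prop:endEst}; it gives only the easy range $q\ge 2$ and plays no role in reaching the exponent $(d+3k)/(d+k)$.

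What must actually be checked is where \eqref{hessianinvertible} and \eqref{hessiannonvanishing} enter the Section 4 argument. They appear in exactly two places: (a) the transversality Lemma \ref{platetrans}, which controls the intersection of $\widetilde\Gamma_1^{\nu_1,\nu_2'}(R,R^\delta)$ with a plate $R^\delta\pi_{\nu_2}$; and (b) the count \eqref{distantcubes}, which uses injectivity of $\nu\mapsto\sum_j t_j\nabla\varphi_j(\nu)$. The paper handles (a) by replacing Lemma \ref{platetrans} with Lemma \ref{platetrans2}, whose proof reduces to \eqref{inter} and then multiplies by $(\mathbf V^t,\mathbf D^t)$ to obtain $\det(\mathbf V H\mathbf V^t)\neq 0$, i.e.\ \eqref{normalcondition}; this is essentially the same block computation you wrote, applied at the geometrically relevant spot. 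For (b) one does not need global injectivity on $S_1$ (which would require $H$ invertible), only injectivity restricted to the $(d-k)$-manifold $\Pi_1^{\nu_1,\nu_2'}$; this follows because $\mathbf N(\nu_2,\nu_1)\big(\sum_j t_jH\varphi_j(\nu_0)\big)$ has rank $d-k$ by \eqref{normalcondition}. With (a) and (b) in hand, the entire Section 4 induction runs verbatim and yields \eqref{mainbi}. Your algebraic observation is close in spirit to the substitute for (a), but you never identify (a) or (b) as the points to be addressed, and the mechanism you propose instead would not close the argument.
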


However, to obtain the global estimates $L^2\times L^2\to L^q$, for $q>\frac{d+3k}{d+k}$, we need to impose a decay condition on the Fourier transform of the surface measure, since it is needed to apply the epsilon removal lemma \cite{BG}.   Under the condition \eqref{hessianinvertible} such a decay estimate follows from the stationary phase method.

For $q\ge 2$, the estimate  \eqref{bilest}  is relatively easier to prove under the  conditions \eqref{hessianinvertible}, \eqref{hessiannonvanishing}. The following may be thought of as a generalization of Theorem 2.3 in \cite{TVV} (see also Theorem 4.2 in \cite{MVV}) which is concerned with elliptic hypersurfaces.  A generalization to general hypersurfaces had already been observed in \cite{L}.   As {a} byproduct this gives estimates for the endpoint cases of $(p,q)$  satisfying  $\frac{1}{p}+\frac{d+3k}{d+k}\frac{1}{2q} = 1$, $q\ge 2$.

\begin{prop} \label{prop:endEst}  Suppose the condition \eqref{Mcondition} holds for  $\nu_1 \in S_1$, $\nu_2 \in S_2$ and  $|t|=1$.   Then, for $q \ge 2$ and $\frac{1}{p}+\frac{d+3k}{d+k}\frac{1}{2q} \le 1$, the estimate \eqref{bilest} holds.
\end{prop}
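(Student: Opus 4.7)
The plan is to reduce to two endpoint estimates and invoke bilinear interpolation. Set $p_0=\frac{4(d+k)}{3d+k}$; one checks that the line $\frac{1}{p}+\frac{d+3k}{d+k}\frac{1}{2q}=1$ passes through both $(1,\infty)$ and $(p_0,2)$, so it suffices to prove the estimate at those two corners.

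The trivial endpoint $(p,q)=(1,\infty)$ is immediate: since $|E_i f(x,t)|\le\|f\|_{L^1(S_i)}$, we get $\|E_1f\,E_2g\|_{L^\infty(\mathbb{R}^{d+k})}\le\|f\|_1\|g\|_1$.

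For the main endpoint $(p_0,2)$, I would apply Plancherel,
\[
\|E_1f\cdot E_2g\|_{L^2}^2=\|(f\,d\sigma_1)*(g\,d\sigma_2)\|_{L^2}^2,
\]
expand the squared convolution, and eliminate the $\delta_d$ constraint in the physical variable via the substitution $\eta_1=\xi_1+h,\ \eta_2=\xi_2-h$, which yields
\[
\iiint f(\xi_1)g(\xi_2)\overline{f(\xi_1+h)g(\xi_2-h)}\,\delta_k\bigl(\Psi(\xi_1,\xi_2,h)\bigr)\,d\xi_1\,d\xi_2\,dh,
\]
with $\Psi(\xi_1,\xi_2,h)=\Phi(\xi_1)+\Phi(\xi_2)-\Phi(\xi_1+h)-\Phi(\xi_2-h)$. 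This is exactly the step at which \eqref{Mcondition} enters: $\det\mathbf M(t,\xi_1,\xi_2,\nu)$ is, up to sign, the Jacobian determinant of the change of variables one uses to solve $\Psi=0$ by the implicit function theorem (with $t\in S^{k-1}$ playing the role of dual parameter), so \eqref{Mcondition} provides the uniform non-degeneracy needed to integrate out the $\delta_k$ factor while retaining a bounded weight. Once $k$ of the $h$-variables have been integrated out, what remains is an absolutely convergent integral on a $(3d-k)$-dimensional subvariety; applying Hölder with the exponent $p_0$ dictated by dimension counting bounds this by $C\|f\|_{L^{p_0}}^2\|g\|_{L^{p_0}}^2$.

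Bilinear complex interpolation between the two endpoints covers the line $\frac{1}{p}+\frac{d+3k}{d+k}\frac{1}{2q}=1$ for $q\in[2,\infty]$, and a further Hölder comparison using that $f,g$ are supported in bounded sets (so $\|f\|_{L^{p_0}}\lesssim\|f\|_{L^p}$ for $p\ge p_0$) extends the bound to the whole prescribed region $\{q\ge 2,\ \frac{1}{p}+\frac{d+3k}{d+k}\frac{1}{2q}\le 1\}$.

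The main obstacle is the Hölder step in proving $(p_0,2)$: the exponent $p_0$ must fall out of a careful dimension count on the parametrized constraint manifold, and the uniform positive lower bound on $|\det\mathbf M|$ supplied by \eqref{Mcondition} is what converts the pointwise regularity of the change of variables into a globally controlled weight.
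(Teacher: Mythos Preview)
Your overall strategy---reduce to the corners $(p,q)=(1,\infty)$ and $(p_0,2)$ with $p_0=\frac{4(d+k)}{3d+k}$ and interpolate---matches the paper's, and the Plancherel expansion of $\|E_1f\,E_2g\|_2^2$ into a quadrilinear integral with a $\delta_k$-constraint is the same starting point. The gap is in how you finish the $(p_0,2)$ endpoint.

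First, your identification of where \eqref{Mcondition} enters is off. Solving $\Psi=0$ by the implicit function theorem only needs $\nabla_h\Psi|_{h=0}=\mathbf D(\xi_1,\xi_2)^t$ to have rank $k$; that is strictly weaker than \eqref{Mcondition}. Second, and more seriously, the sentence ``applying H\"older with the exponent $p_0$ dictated by dimension counting bounds this by $C\|f\|_{p_0}^2\|g\|_{p_0}^2$'' is not a valid step. After the $\delta_k$ is resolved you are integrating a product of four functions over a codimension-$k$ subvariety of $\mathbb R^{3d}$; controlling that by ambient $L^{p_0}$ norms is not H\"older but an $L^p$-improving estimate for the associated generalized Radon transform. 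It is precisely the \emph{rotational curvature} of this incidence variety---which is what \eqref{Mcondition} encodes---that makes such an estimate true, and it has to be proved, not counted.

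The paper does this by a different mechanism: it decomposes $\delta=\sum_j 2^{jk}\eta(2^j\cdot)$ dyadically, writes each piece as an oscillatory integral in $(\xi_1,\eta_1,\tau)$ with phase $\tau\cdot\Phi^{\xi_2}(\xi_1,\eta_1)$, and applies the Greenleaf--Seeger $L^2$ estimate (Lemma~\ref{lem:SG}); the mixed-Hessian hypothesis of that lemma is exactly $\det\mathbf M(\tau,\cdot)\neq 0$, giving $|I_j|\lesssim 2^{-j(d-k)/2}\|f\|_2\|g\|_2$. Summed against the trivial bound $|I_j|\lesssim 2^{jk}\|f\|_1\|g\|_1$ via Bourgain's trick, this yields a Lorentz bound at $p=\frac{d+k}{d}$ for the inner bilinear form, hence an asymmetric quadrilinear estimate $\|E_1f_1E_2f_2\|_2^2\lesssim\|f_1\|_{p,1}\|\bar f_1\|_{p,1}\|f_2\|_1\|\bar f_2\|_\infty$, and Christ's multilinear trick then symmetrizes to the strong-type $L^{p_0}$ bound. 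If you prefer the averaging-operator route (closer to \cite{TVV}), you must replace your ``H\"older by dimension count'' with a genuine $L^p\to L^{p'}$ bound for a Radon transform under a nonvanishing rotational-curvature hypothesis, and check the exponents match.
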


\begin{remark} \label{uniform} In the proof of the above results we may assume that the aforementioned conditions hold uniformly, by breaking up the extension operator by decomposing $S_1$, $S_2$ into sufficiently small pieces.
That is to say,  there is a constant $c>0$ such that
for $\nu\in S_1\cup S_2 $ and  $|t|= 1$,
\begin{equation}
\label{hessianinvertible2}
\Big|\det \Bpar \sum_{i=1}^k t_i H\varphi_i(\nu)\parB \Big | \ge c
\end{equation}
and,  for $\nu_1, \nu_1' \in S_1$, $\nu_2, \nu_2' \in S_2$,  $|t|\sim 1$ and for $\nu\in S_1\cup S_2 $,
\begin{equation}\label{hessiannonvanishing2}
\Big|\det \Big[ \mathbf D(\nu_1,\nu_2)  \Bpar  \sum_{j=1}^k t_j H\varphi_j(\nu)
 \parB^{-1}  \mathbf D^t(\nu_1', \nu_2')\Big] \Big|\ge c.
\end{equation}
The same holds also for the conditions \eqref{Mcondition} and \eqref{normalcondition}.
\end{remark}

{{}  \subsection*{Necessary conditions for (\ref{bilest})}
By modifying the examples in \cite{TV1} with some specific surfaces} we see that   \eqref{bilest} cannot hold in general, unless
\begin{gather}
q\ge \frac{d+k}{d}, \label{qbb}\\
\frac{1}{p}+\frac{d+3k}{d+k}\frac{1}{2q} \le 1, \label{sqshex}\\
\frac{2(d-k)}{p}+\frac{d+3k}{q} \le 2d. \label{strex}
\end{gather}
{{}  In fact, $(i)$  \eqref{qbb} is necessary for
\eqref{bilest} to hold under \eqref{hessianinvertible}, and  $(ii)$ so is  \eqref{sqshex}  under the assumption that  the matrix $\mathbf D(\nu_1,\nu_2)$ has rank $k$ for $\nu_j\in S_j$, $j=1,2$.  {However}, in general,  \eqref{strex} is not   necessarily required for  \eqref{bilest}, but as is well known there are various $\Phi$ satisfying  \eqref{hessianinvertible} and \eqref{hessiannonvanishing}  for which \eqref{bilest} fails if $ \frac{2(d-k)}{p}+\frac{d+3k}{q} > 2d$.  We show $(i)$ and $(ii)$ in the following paragraphs. }

\smallskip

{{}  {$(i).$} By making use of the stationary phase method together with the condition \eqref{hessianinvertible} it is not difficult to see
that, with suitable choice of $x_0$,  there is a cube $Q$ of sidelength $R \gg 1$ such that
\(
|E_1 (e^{-2\pi i x_0 \cdot \xi}\psi) | \sim |E_2 \psi(x)| \sim R^{-\frac{d}{2}}
\) on $Q$ provided that  supports of $\psi_1,$ $\psi_2$ are small enough. We insert these into \eqref{bilest} to see  $
R^{-\frac{d}{2}}R^{-\frac{d}{2}} R^{\frac{d+k}{q}} \lesssim 1,
$
from which we get \eqref{qbb} by letting $R\to \infty$.\footnote{This can also be shown by making use
of {a} wave packet decomposition (see Lemma \ref{lem:asy}) and randomization.}

\smallskip

{{}  {$(ii)$.} For  $j=1,2$,  let  $\Sigma_j$ be the surface $\{(\xi, \Phi(\xi)): \xi\in S_j\}$, and  denote by $d\sigma_j$ the induced Lebesgue measure on $\Sigma_j$. To see  \eqref{qbb} it is more convenient to consider $f\to  \widehat{ f d\sigma_j}$,  instead of dealing with the operator $E_j$.
 Also, let $\nu_j$ be the center of cube $S_j$ and {let} $\zeta_j=(\nu_j, \Phi(\nu_j))\in\Sigma_j$, $j=1,2$.
 The normal space $\mathbf N_j$ to $\Sigma_j$ at $\zeta_j$ is spanned by
\[  \mathbf n_{j,i} =(-\nabla\varphi_i(\nu_j), e_i), i=1, 2, \dots, k,  \]
where $e_i\in \mathbb R^k$ is the usual unit vector with its $i$-th entry being equal to $1$.
Clearly, these vectors are linearly independent because $\mathbf D(\nu_1,\nu_2)$ has rank $k$. Let $\mathbf p_{n}$, $n=1, \dots, d-k,$ be an orthonormal basis of the orthogonal complement of ${{\rm span}} \{ \mathbf n_{j,i},\,   i=1, 2, \dots, k,\, j=1,2\} $. Let us set, for $j=1,2,$
 \[ \Lambda_j= \{\zeta\in \Sigma_j:   |(\zeta-\zeta_j)\cdot \mathbf n_{3-j,i}|\le \delta,
  |(\zeta-\zeta_j)\cdot \mathbf p_{n}| \le \delta^\frac12,
    i=1, \dots, k, \, n=1, \dots, d-k  \}.\]
Now, we set $f_j=\chi_{\Lambda_j}$, $j=1,2$.  Then  it is easy to see
$| \widehat{ f_j d\sigma_j} (x,t)|\gtrsim \delta^{\frac{d+k}2} $, $j=1,2$, provided that
\[ |(x,t)\cdot \mathbf n_{\ell,i}|\le c\delta^{-1},
  |(x,t)\cdot \mathbf p_{n}| \le c\delta^{-\frac12},
    i=1, \dots, k, \ell=1,2,\, n=1, \dots, d-k\]
with sufficiently small $c>0$. (For example, see the proof Lemma \ref{lem:asy}.) Since \eqref{bilest} implies
$\|  \widehat{ f_1 d\sigma_1}  \widehat{ f_2 d\sigma_2} \|_q\lesssim \|f_1\|_p \|f_2\|_p$, we get
$\delta^{d+k-\frac{d+3k}{2q}}\le C\delta^{\frac{d+k}p}$ and \eqref{sqshex} by letting $\delta\to 0$. }}

\subsection*{Restriction to complex surfaces.} Using the above theorem we can obtain a bilinear restriction estimate for complex quadratic surfaces.
To define the (Fourier) extension operator for a complex surface we first distinguish the dot product and the inner product for complex variables, and define an auxiliary product $\odot$.
For $z, w \in \mathbb C^m$, we define $z \cdot w$, $\langle z, w \rangle$, $z \odot w$  by
\[
z \cdot w = \sum_{j=1}^m z_j w_j,
 \quad \langle z, w \rangle  = \sum_{j=1}^m z_j \bar w_j,
 \quad z \odot w = \mathrm{Re}\, \langle z, w \rangle,
\]
respectively.  {{} Hence, if $z=x+iy$ and $w =u + iv$ for $x, y, u, v \in \mathbb R^m$, {then} $z \odot w = x \cdot u + y \cdot v$. If we identify $\mathbb C^m$ with $\mathbb R^{2m}$ in the usual way, then $z \odot w$ is just the inner product on $\mathbb R^{2m}$.}

Let $n \ge 1$ be an integer and let $D$ be a real symmetric invertible matrix. Then we define
 the complex quadratic surface $\gamma \subset \mathbb C^{n+1}$  by
\begin{equation}  \label{gernPa}
 \gamma(z) = \Big(z,  \frac12\,z^t D z\Big),  \quad z\in \mathbb C^n.
\end{equation}
Now we define the extension operator $E_\gamma f$ by
\begin{equation*} \label{def:T}
E_\gamma f(w) =\int_{\mathbb C^n} e^{2 \pi i  [ w \odot \gamma(z)] } f(z)\,
dz, \quad w\in \mathbb C^{n+1}
\end{equation*}
where we have written $dz$ for $dx \,dy$, $z=x+iy$.
{{}The operator $E_\gamma f$ is an extension operator for surfaces of codimension 2 in $\mathbb R^{2n}$, which is given by
$(x,y,\frac12 \Re (x+iy)^t D (x+iy)), \frac12 \Im (x+iy)^t D (x+iy))$, $x,y\in \mathbb R^n$.}
From Theorem \ref{thm} we can establish the following.
\begin{corollary} \label{prop:Gercomp}
Let $S_1$, $S_2$ be closed cubes in $\mathbb C^n$.
Suppose that,  for any $z_1 \in S_1$ and $z_2 \in S_2$,
\begin{equation} \label{ssc}
|(z_2 - z_1)^t D(z_2 - z_1)| \neq  0.
\end{equation}
Then,  whenever $f$, $g$  are supported on  $S_1$,  $S_2$, respectively,
for $q > \frac{n+3}{n+1}$ and $\frac{1}{p}+ \frac{n+3}{n+1}\frac{1}{2q} <1$, there is a constant $C$ such that
\[
\|  E_\gamma f \, E_\gamma g \|_{L^q(\mathbb C^{n+1})}
\le C \|f\|_{L^p( \mathbb C^n)}\|g\|_{L^p(\mathbb C^{n})}.
\]
\end{corollary}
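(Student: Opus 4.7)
The plan is to recognize $E_\gamma$ as a special instance of the extension operator $E_\Phi$ from Theorem~\ref{thm} and then verify the two curvature hypotheses \eqref{hessianinvertible} and \eqref{hessiannonvanishing}.

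First I would separate real and imaginary parts. Writing $z = x + iy$ and $w = (\xi + i\eta,\; s + it)$ with $x,y,\xi,\eta\in\mathbb{R}^n$ and $s,t\in\mathbb{R}$, a direct expansion of $w\odot\gamma(z)$ gives the phase
\[
\xi\cdot x + \eta\cdot y + s\,\varphi_1(x,y) + t\,\varphi_2(x,y),
\]
where $\varphi_1(x,y) = \tfrac12(x^tDx - y^tDy)$ and $\varphi_2(x,y) = x^tDy$. Under the identification $\mathbb{C}^m\cong\mathbb{R}^{2m}$, $E_\gamma$ is exactly the operator $E_\Phi$ of Theorem~\ref{thm} with $d = 2n$, $k = 2$, and $\Phi = (\varphi_1,\varphi_2)$. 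After a harmless rescaling of $z$ (which only modifies $D$), we may assume $S_1,S_2\subset I^{2n}$, so that it remains to check the two determinant conditions.

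The Hessians of $\varphi_1,\varphi_2$ are the constant block matrices
\[
H\varphi_1 = \begin{pmatrix} D & 0 \\ 0 & -D \end{pmatrix},\qquad H\varphi_2 = \begin{pmatrix} 0 & D \\ D & 0 \end{pmatrix},
\]
so that $t_1H\varphi_1 + t_2H\varphi_2$ factors as $\bigl(\begin{smallmatrix} t_1I & t_2I\\ t_2I & -t_1I\end{smallmatrix}\bigr)\bigl(\begin{smallmatrix} D & 0\\ 0 & D\end{smallmatrix}\bigr)$, whose determinant equals $(-1)^n(\det D)^2\neq 0$ for $|t|=1$. This verifies \eqref{hessianinvertible}, and since the left factor is an involution on $|t|=1$, the inverse of the sum has the same block shape with $D$ replaced by $D^{-1}$.

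The real work is in \eqref{hessiannonvanishing}. Setting $a = x_2-x_1$ and $b = y_2-y_1$ (so $z_2-z_1 = a+ib$), a direct computation of the gradient differences gives
\[
\mathbf{D}(\nu_1,\nu_2) = \begin{pmatrix} a^tD & -b^tD \\ b^tD & a^tD \end{pmatrix}.
\]
Multiplying out $\mathbf{D}\bigl[t_1H\varphi_1 + t_2H\varphi_2\bigr]^{-1}\mathbf{D}^t$, the $D$'s cancel and the result is a $2\times 2$ matrix whose entries are linear combinations of the two scalars $A := a^tDa - b^tDb$ and $B := a^tDb$; its determinant simplifies to $-(t_1^2+t_2^2)(A^2 + 4B^2)$. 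Since $(z_2 - z_1)^tD(z_2 - z_1) = A + 2iB$, this equals $-\bigl|(z_2 - z_1)^tD(z_2 - z_1)\bigr|^2$, which is nonzero precisely by hypothesis \eqref{ssc}. With both curvature conditions verified uniformly, Theorem~\ref{thm} applied with $d = 2n$, $k = 2$ yields the asserted bound in the range $q > \frac{d+3k}{d+k} = \frac{n+3}{n+1}$ and $\frac1p + \frac{n+3}{n+1}\cdot\frac{1}{2q} < 1$. I do not anticipate a deeper obstacle: the only non-routine step is the algebraic simplification producing $-|(z_2-z_1)^tD(z_2-z_1)|^2$, which follows cleanly once one exploits the commuting block structure of the Hessians.
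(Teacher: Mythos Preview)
Your proposal is correct and follows essentially the same route as the paper's own proof: both identify $E_\gamma$ with $E_\Phi$ for $d=2n$, $k=2$, compute the block Hessians and the matrix $\mathbf D(\nu_1,\nu_2)$ explicitly, and reduce \eqref{hessiannonvanishing} to the identity $\det\bigl[\mathbf D\,H^{-1}\mathbf D^t\bigr]=-(t_1^2+t_2^2)\bigl|(z_2-z_1)^tD(z_2-z_1)\bigr|^2$, which is nonzero by \eqref{ssc}. Your observation that the left block factor is an involution on $|t|=1$ is a neat shortcut for the inverse, but otherwise the arguments coincide.
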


This theorem can also be stated without using the complex number notation, but the use of the complex number notation makes it easier to derive the linear estimates from the bilinear one.
The condition \eqref{ssc} in $\mathbb C^2$  can be contrasted with that in $\mathbb R^2$.
If $S_1, S_2 \subset \mathbb R^2$ and the eigenvalues of $D$ have the same sign, then the condition \eqref{ssc} is always valid if $\dist(S_1, S_2)\neq 0$.  But, when $S_1, S_2 \subset \mathbb C^2$, the condition \eqref{ssc} may fail even if the separation condition is satisfied.
{{}For instance, if $D$ is the $2 \times 2$ identity matrix, the condition \eqref{ssc} becomes $|(v_1-w_1)^2 + (v_2-w_2)^2| \gtrsim 1$ with $z_1=(v_1,v_2)$ and $z_2=(w_1,w_2)$. Since we may factorize $(v_1-w_1)^2 + (v_2-w_2)^2$ as $ [(v_1-w_1)+i(v_2-w_2)][(v_1-w_1)-i(v_2-w_2)]$, the expression $|(v_1-w_1)^2 + (v_2-w_2)^2|$ may vanish even if $\mathrm{dist}(S_1, S_2) \gtrsim 1$.}  
When $D$ has eigenvalues with different signs, this phenomenon may occur even when $S_1,S_2 \subset \mathbb R^2$; for instance, if
$D$ is the $2\times 2$ diagonal matrix with {diagonal entries $1$ and $-1$}, then we have $x\cdot Dx = x_1^2 - x_2^2 = (x_1+x_2)(x_1-x_2)$.
This {real-variable} case was studied by Lee \cite{L} and Vargas \cite{V}. In the special case that the surface is two-dimensional they could deduce a linear estimate from the bilinear one.

By adapting their argument, we can obtain the following linear estimate.

\begin{theorem} \label{thm:cpl}   Let $n=2$ and $\gamma$ be given by \eqref{gernPa} with a nonsingular real symmetric matrix $D$.
Then, for $q > \frac{10}{3}$ and $\frac{1}{p} + \frac{2}{q} < 1$,
\begin{equation} \label{Lin-Rest}
\|  E_\gamma f \|_{L^q(\mathbb C^3)} \le C \| f \|_{L^p(\mathbb C^2)}
\end{equation}
whenever $f$ is supported in a bounded set.
\end{theorem}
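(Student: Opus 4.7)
The plan is to deduce \eqref{Lin-Rest} from the bilinear estimate of Corollary \ref{prop:Gercomp} by the Whitney-decomposition argument of Lee \cite{L} and Vargas \cite{V}, adapted to the present complex setting. The central subtlety, analogous to the real saddle in $\mathbb R^3$, is that condition \eqref{ssc} fails on the entire null variety $\{w \in \mathbb C^2 : w^t D w = 0\}$, which is strictly larger than the diagonal of $\mathbb C^2 \times \mathbb C^2$. After a real linear change of variables I may assume $D$ equals $I_2$ or $\mathrm{diag}(1,-1)$; in both cases the null variety factors as a union $L_+ \cup L_-$ of two distinct complex lines through the origin (for example, $z_1 \pm i z_2 = 0$ when $D = I_2$). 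Accordingly, the Whitney decomposition of $\mathbb C^2 \times \mathbb C^2$ must be performed relative to $\Delta := \{(z_1,z_2) : z_2 - z_1 \in L_+ \cup L_-\}$ rather than relative to the diagonal.

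The first step is a dyadic decomposition $f = \sum_\tau f_\tau$ with $\tau$ running over cubes of sidelength $\delta = 2^{-k}$, giving $|E_\gamma f|^2 = \sum_{\tau_1, \tau_2} E_\gamma f_{\tau_1}\, \overline{E_\gamma f_{\tau_2}}$. Pairs $(\tau_1, \tau_2)$ split into \emph{transverse} pairs, for which $\dist(\tau_2 - \tau_1, L_+ \cup L_-) \gtrsim \delta$, and \emph{parallel} pairs, for which $\tau_2 - \tau_1$ lies within distance $\delta$ of $L_+$ or $L_-$. For a transverse pair, a parabolic rescaling sending $\tau_1,\tau_2$ to unit cubes preserves the structure of $\gamma$ and ensures that \eqref{ssc} holds uniformly on the rescaled cubes, so Corollary \ref{prop:Gercomp} applies with an explicit $\delta$-dependence. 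Standard Fefferman--C\'ordoba type orthogonality between the bilinear pieces at scale $\delta$, combined with $\ell^p$-summation that exploits the strict inequality $1/p + 2/q < 1$, will then bound the transverse contribution by $\|f\|_{L^p}^2$.

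The main obstacle is the parallel contribution, where Corollary \ref{prop:Gercomp} is unavailable. To handle it I would further decompose $f$ into thin tubes aligned with $L_\pm$: since $w^t D w$ vanishes identically along $L_\pm$, the phase $w \odot \gamma(z)$ is affine along those complex lines, so inside such a tube the operator $E_\gamma$ is essentially the tensor product of a one-dimensional extension operator in the tube direction (to which sharp curve-restriction theorems, e.g.\ \cites{BOS, Sto}, apply) and a parabolically curved operator in the transverse complex direction (to which the Stein--Tomas $L^2 \to L^4$ bound applies; this is available because $\det(t_1 H\varphi_1 + t_2 H\varphi_2) = (\det D)^2 \neq 0$ for $|t|=1$, so \eqref{hessianinvertible} holds). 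Real interpolation between these two ingredients and dyadic summation over tube scales closes the argument in the range $q > 10/3$, $1/p + 2/q < 1$. The restriction $n=2$ is essential: only then does the complex null variety decompose into finitely many complex lines, which is what makes the parallel-pair analysis tractable; for $n \ge 3$ the null variety has positive complex dimension and this scheme breaks down.
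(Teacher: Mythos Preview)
Your overall strategy---deduce the linear estimate from Corollary \ref{prop:Gercomp} via a Whitney decomposition adapted to the null variety, as in \cites{L,V}---matches the paper's, and your observation that for $n=2$ the null variety $\{w:w^tDw=0\}$ splits into two complex lines is exactly the key structural fact. But your treatment of the parallel pairs contains a genuine error.

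You claim that along $L_\pm$ the phase is affine, and then propose to apply \emph{curve-restriction} theorems in the tube direction and a \emph{parabolically curved} extension operator in the transverse direction. These two assertions are inconsistent with each other and with the geometry. Pass to the coordinates in which $\Phi(z_1,z_2)=z_1z_2$ (this is what the paper does first: since $z_1^2\pm z_2^2$ factors over $\mathbb C$, a complex-linear change of variables reduces both signatures to the product form). In these coordinates $L_+$ and $L_-$ are the two coordinate axes, and the phase $z_1z_2$ is linear along \emph{each} of them. So there is no curvature in the tube direction (curve-restriction is irrelevant) and no curvature in the transverse direction either; all the curvature sits in the mixed Hessian. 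Consequently $E_\gamma$ is not a tensor product of two lower-dimensional curved extension operators, and the interpolation you sketch between curve-restriction and Stein--Tomas does not get off the ground.

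The paper's fix is much simpler and avoids any separate ``parallel'' analysis. After changing variables to $\Phi(z_1,z_2)=z_1z_2$, one first proves the bilinear estimate under the strong hypothesis $|z_1-w_1|\sim 1$ \emph{and} $|z_2-w_2|\sim 1$ (this is immediate from Corollary \ref{prop:Gercomp}). One then upgrades this to the bilinear estimate under mere separation $\dist(Q_1,Q_2)\gtrsim 1$ by a Whitney decomposition in the \emph{second} coordinate only, combined with the anisotropic rescaling $(z_1,z_2)\mapsto(z_1,2^{-j}z_2)$, which the product phase $z_1z_2$ respects. Your parallel pairs are precisely those for which one coordinate difference is small; the second Whitney layer in the other coordinate handles them. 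Once the bilinear estimate holds under ordinary separation, the standard \cite{TVV} passage to the linear estimate applies without change.
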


By analogy with the corresponding problem for the paraboloid (elliptic or hyperbolic) in $\bbR^3$, it may be conjectured that \eqref{Lin-Rest} holds if and only if $q>3$ and $\frac{1}{p} + \frac{2}{q} \le 1$. Theorem \ref{thm:cpl} extends the known $(p,q)$ range for the operator $E_\gamma f$ when $D$ is a nonsingular real symmetric matrix.  This result is an analog of the adjoint Fourier restriction estimates for the hyperbolic paraboloid in $\bbR^3$, which is known to hold for the same range of $p$, $q$.
As a special case of the results by Christ (see Lemma 4.3 in \cite{Ch1}) and Mockenhaupt (Theorem 2.11, \cite{Moc}),
it was previously known that $E f$ maps $L^2(\bbR^{4})$ boundedly to $L^4 (\bbR^6)$. Also, the slightly stronger Lorentz space estimate
$\| E f \|_{L^{4,2} (\bbR^6)} \le C \| f\|_{L^2 (\bbR^{4})}$ can be deduced by applying Theorem 1.1 in \cite{BS}.
It is quite likely that the multilinear approach will yield further progress on these problems. We hope to return to this problem in the near future.

\

{\sl Notation.} We adopt the usual convention to let $C$ or $c$ represent strictly positive
constants, whose value may vary from line to line. But these constants will always be independent of $f$, for instance.
We write $A \lc B$ or $B \gc A$ to mean $A \le C B$, and $A\sim B$ means both $A\lc B$ and $B\lc A$.


\section{\texorpdfstring{$L^\frac{4(d+k)}{3d+k}\times L^\frac{4(d+k)}{3d+k}\to L^2$}{L*L -> L2} estimates and proof of Proposition \ref{prop:endEst}}
In this section we show Proposition \ref{prop:endEst}.  Our proof here is different from that in \cite{TVV}. Instead of making use of the boundedness of the
averaging operator, we directly exploit the oscillatory decay estimate which is concealed in the averaging operator.   For this   we need the following  lemma.

\begin{lemma}[\cite{GS}*{Section 1.1}]  \label{lem:SG}
Let $a\in C^\infty_c(\mathbb R^d\times \mathbb R^d\times
\mathbb R^N)$ and set
\[
T_\lambda f(x)
=\int_{\mathbb R^d}\int_{\mathbb R^N}
e^{i\lambda\phi(x,y,\theta)} a(x,y,\theta) \, d\theta\, f(y) \,dy,
\]
where $\phi$ is a smooth function on the support of $a$.  Suppose
$\det\begin{pmatrix}
\phi_{\theta\theta}''& \phi_{x\theta}''\\
\phi_{y\theta}''& \phi_{xy}''
\end{pmatrix} \neq 0$
on the support of $a$ whenever $\phi'_\theta=0$. Then,
$ \| T_\lambda f\|_{2}\lesssim \lambda^{-\frac{d+N}{2}}\|f\|_2.$
\end{lemma}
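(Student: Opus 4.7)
The plan is to prove Lemma \ref{lem:SG} by the $TT^\ast$ method, reducing to a kernel estimate controlled via stationary phase, with the non-degeneracy hypothesis playing the role of the H\"ormander condition on the reduced phase.

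Forming $T_\lambda T_\lambda^\ast$, its Schwartz kernel is
\[
K_\lambda(x,x') = \iiint e^{i\lambda[\phi(x,y,\theta) - \phi(x',y,\theta')]}\, a(x,y,\theta)\,\overline{a(x',y,\theta')}\, dy\, d\theta\, d\theta',
\]
so by Schur's test it suffices to show $\sup_x \int |K_\lambda(x,x')|\, dx' \lesssim \lambda^{-(d+N)}$. In the regime where the partial Hessian $\phi_{\theta\theta}''$ is itself non-singular on the support of $a$, I would first apply stationary phase in $\theta$ (and $\theta'$) to reduce $T_\lambda$, modulo lower-order terms, to a Fourier integral operator
\[
T_\lambda f(x) = \lambda^{-N/2} \int e^{i\lambda \tilde\phi(x,y)}\, \tilde a(x,y;\lambda)\, f(y)\, dy, \qquad \tilde\phi(x,y) := \phi(x,y,\theta^\ast(x,y)),
\]
where $\theta^\ast(x,y)$ is the critical point of $\theta \mapsto \phi(x,y,\theta)$. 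Differentiating the identity $\phi_\theta(x,y,\theta^\ast(x,y)) = 0$ and using the chain rule give $\tilde\phi_{xy}'' = \phi_{xy}'' - \phi_{y\theta}''(\phi_{\theta\theta}'')^{-1}\phi_{x\theta}''$, and the Schur complement identity
\[
\det \begin{pmatrix} \phi_{\theta\theta}'' & \phi_{x\theta}'' \\ \phi_{y\theta}'' & \phi_{xy}'' \end{pmatrix} = \det \phi_{\theta\theta}'' \cdot \det \tilde\phi_{xy}''
\]
then shows that the hypothesis is equivalent to the non-degeneracy of the reduced phase $\tilde\phi$. The standard $L^2$ bound $\lesssim \lambda^{-d/2}$ for non-degenerate oscillatory integral operators (provable by a second $TT^\ast$ together with integration by parts in $y$, exploiting the gradient $\sim \lambda\, \tilde\phi_{yx}''(x-x')$ of the iterated phase) then yields $\|T_\lambda\|_{L^2 \to L^2} \lesssim \lambda^{-N/2} \cdot \lambda^{-d/2} = \lambda^{-(d+N)/2}$.

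The principal obstacle is that the hypothesis does not imply that $\phi_{\theta\theta}''$ is itself non-singular at the critical points, so the reduction to a Fourier integral operator via stationary phase in $\theta$ is not directly available in general. To handle this, I would work with $K_\lambda$ directly as an oscillatory integral in $(y,\theta,\theta') \in \mathbb R^{d+2N}$, apply the symmetrizing change of variables $\sigma = (\theta+\theta')/2$, $\eta = \theta - \theta'$, and Taylor-expand about the critical manifold. A careful Hessian computation shows that the quadratic form in the transverse variables is governed (up to non-zero factors) by the full block matrix appearing in the statement of the lemma, so that its non-degeneracy is precisely what is needed to apply stationary phase in the transverse directions; non-stationary phase in the direction of $x - x'$ then gives the rapid decay of $K_\lambda(x,x')$ in $\lambda|x-x'|$ required to close Schur's test.
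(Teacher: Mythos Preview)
The paper does not prove this lemma; it is quoted from \cite{GS}*{Section 1.1} and used as a black box in the proof of Proposition~\ref{prop:endEst}. So there is no proof in the paper to compare your proposal against.

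Your approach is sound in outline. The first case (invertible $\phi_{\theta\theta}''$) is correct and standard: the Schur complement identity correctly identifies the hypothesis with $\det\tilde\phi_{xy}'' \neq 0$, and H\"ormander's $L^2$ bound for non-degenerate oscillatory integral operators gives the extra factor $\lambda^{-d/2}$. For the general case, your $TT^\ast$ strategy is the right one, but the final paragraph elides the main computation. Saying that the Hessian in the transverse variables ``is governed by the full block matrix'' is an assertion, not an argument, and the phrase ``non-stationary phase in the direction of $x-x'$'' is imprecise since $x,x'$ are parameters in $K_\lambda(x,x')$, not integration variables. What you actually need is that after stationary phase in a suitable $(d+2N)$-dimensional combination of the $(y,\theta,\theta')$ variables (or, equivalently, after stationary phase in $N$ directions followed by integration by parts in the remaining $d+N$), the resulting amplitude carries a factor $(1+\lambda|x-x'|)^{-M}$, which then integrates in $x'$ to the Schur bound $\lambda^{-(d+N)}$. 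This is doable but not quite routine; the cleanest route, and the one taken in \cite{GS} and in H\"ormander's FIO calculus, is to recognise the hypothesis as exactly the statement that the canonical relation of the oscillatory distribution $\int e^{i\lambda\phi}a\,d\theta$ is a local canonical graph, whereupon the $L^2$ boundedness of Fourier integral operators of order zero gives the conclusion without a separate case analysis.
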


\begin{proof}[Proof of Proposition \ref{prop:endEst}]
By interpolation with the trivial $L^1\times L^1\to L^\infty$ estimate, it suffices to show
\[ \| E_1f_1 \,E_2f_2 \|_2 \lesssim  \|f_1\|_{\frac{4(d+k)}{3d+k}} \|f_2\|_{\frac{4(d+k)}{3d+k}}.
\]
For fixed $\xi_2,$ set
\[\Phi^{\xi_2}(\xi_1,\eta_1 )= \Phi(\xi_1)  + \Phi(\xi_2) - \Phi(\eta_1)- \Phi(\xi_1+\xi_2-\eta_1)\]
and
\[ I^{\xi_2} (f_1, \bar f_1) =\iint \delta( \Phi^{\xi_2}(\xi_1,\eta_1 ))
f_1(\xi_1)\bar f_1(\eta_1)
d\xi_1 d\eta_1,\]
where $\delta$ is the delta function. Its composition is well defined, since the vectors $\nabla \varphi_i(\nu_2) -  \nabla \varphi_i(\nu_1),$ $ i=1,\dots, k$, are linearly independent.

By Plancherel's theorem
\begin{align*}
\| E_1f_1 \,E_2f_2 \|_2^2 &=
\iiiint \delta( \xi_1 +\xi_2 -\eta_1-\eta_2, \Phi(\xi_1)
+ \Phi(\xi_2) - \Phi(\eta_1)- \Phi(\eta_2))
\\
&\qquad\qquad \times f_1(\xi_1)f_2(\xi_2)\bar f_1(\eta_1)\bar f_2(\eta_2)\, d\xi_1 d\xi_2
d\eta_1 d\eta_2
\\
&= \iiint   \delta( \Phi^{\xi_2}(\xi_1,\eta_1 )) f_1(\xi_1)\bar f_1(\eta_1)f_2(\xi_2)\bar f_2(\xi_1+\xi_2-\eta_1)\,
d\xi_1 d\xi_2 d\eta_1,
\end{align*}
where $f_1$, $f_2$ are assumed to be supported in $S_1,$ $S_2$, respectively.
We claim that
\begin{equation} \label{1-1}
\| E_1f_1 \,E_2f_2 \|_2^2\lesssim  \|f_1\|_{p,1} \|f_2\|_1  \|\bar f_1\|_{p,1} \|\bar f_2\|_\infty,
\end{equation}
where \(p= \frac{d+k}{d}.\) Here $\|f\|_{r,s}$ denotes the norm of Lorentz space $L^{r,s}$. For this we may obviously assume that the functions $f_1,  \bar f_1, f_2,  \bar f_2$ are nonnegative.
In order to show \eqref{1-1} it suffices to prove
\begin{equation}  \label{1-2}
|I^{\xi_2} (f,  g)| \lesssim \|f\|_{p,1}\|\bar g\|_{ p,1}.
\end{equation}

Let $\boldsymbol \psi$ be a smooth function with compact Fourier support contained in $B(0,1)$ such that $\widehat{\boldsymbol \psi}=1$ on $B(0,1/2)$. Since $h(0)=\lim_{j\to \infty} 2^{jk}\int_{\mathbb R^k}  {\boldsymbol \psi}(2^j x)  h(x) dx$ for any Schwartz function $h$, we have
$ \delta= \lim_{j\to \infty} 2^{jk} {\boldsymbol \psi}(2^j x). $
So,  we may  write
\[\delta = \sum_{j=-\infty}^\infty [2^{(j+1)k} {\boldsymbol \psi}(2^{j+1} x) -2^{jk} {\boldsymbol \psi}(2^j x)] =
\sum_{j=-\infty}^\infty 2^{jk} \eta (2^{j} x)\]
where $\eta(x) :=2^k{\boldsymbol \psi}(2x)-{\boldsymbol \psi}(x)$.  By the choice of ${\boldsymbol \psi}$
we see that the Fourier support of $\eta$ is contained in $\{\xi: 1/2<|\xi|\le 2\}$.
We decompose $ I^{\xi_2}(f, g)$ by making use of the above decomposition of $\delta$ to get
\[I^{\xi_2}(f, g)=\sum_{j=-\infty}^\infty I_j(f, g) ,\]
where
\[I_j(f,  g) := 2^{kj}\iint \eta(2^j \Phi^{\xi_2}(\xi_1,\eta_1 ))
f(\xi_1)g(\eta_1)
d\xi_1 d\eta_1.\]
It should be noted that we are assuming that $f,$ $g$ are supported on $S_1$ and $\xi_1+\xi_2-\eta_1\in S_2$.
Using Fourier transform we write $I_j(f_1, \bar f_2)$ as
\[
I_j(f, g)
= 2^{kj} \int \bigg( \iint \widehat \eta(\tau) e^{2^j \tau \cdot \Phi^{\xi_2}(\xi_1,\eta_1 )} d\tau f(\xi_1) d\xi_1
\bigg) g(\eta_1) d\eta_1 .
\]
Now, we will apply Lemma \ref{lem:SG} to the double integral inside the parentheses.
If we set $\phi(\xi_1, \eta_1, \tau)=\tau\cdot \Phi^{\xi_2}(\xi_1,\eta_1 )$, then
\begin{equation*}
\bigg| \det \begin{pmatrix}
\phi_{\tau\tau}''& \phi_{\tau\xi_1}''\\
\phi_{\eta_1\tau}''& \phi_{\xi_1\eta_1}''
\end{pmatrix} \bigg|
= \bigg|  \det
\begin{pmatrix}
0 & \mathbf D(\xi_1, \xi_1+\xi_2-\eta_1)\\
\mathbf D(\eta_1, \xi_1+\xi_2-\eta_1)^t&   \sum_{j=1}^k\tau_jH\varphi_j( \xi_1, \xi_1+\xi_2-\eta_1) \end{pmatrix} \bigg| .
\end{equation*}
So, by the condition \eqref{Mcondition} the last expression does not vanish  since $|\tau|\sim 1$ .
Hence, by Lemma \ref{lem:SG}  it follows that
\[ |I_j(f, g)|\lesssim  2^{-j\frac{d-k}{2}} \|f\|_2\|g \|_2.\]
On the other hand, we have the trivial bound
$ |I_j(f, g)|\lesssim  2^{kj}\|f\|_1\|g \|_1.$
Now we may use a summation method (usually called Bourgain's summation trick) to obtain \eqref{1-2}.

Considering $(f_1,  \bar f_1, f_2,  \bar f_2)\to \| Ef_1 \,Ef_2 \|_2^2$ as a quadrilinear mapping (replacing $\bar f_1$, $\bar f_2$ on the left-hand side by $\bar f_3$ and $\bar f_4$, respectively), we apply M. Christ's multilinear trick \cite{Ch2}.
By symmetry and interpolation we get the estimates
\[  \Big| \iint  Ef_1 \,Ef_2\,\overline{ Ef_3 \,Ef_4 }\,  dxdt \Big|\lesssim \prod_{j=1}^4 \|f_j\|_{p_j,1}\]
for $(1/p_1, 1/p_2, 1/p_3, 1/p_4)$ contained in  the  convex hull of the four points
\[v_1=(1/p, 1/p, 1,0), \, v_2=(1/p,1/p,0,1), \, v_3=(1,0,1/p,1/p), \, v_4=(0,1,1/p,1/p)\]
which is contained in the 3-plane $\Pi=\{u_1+ u_2+u_3+u_4=1+\frac2p \}$. The convex hull has a nonempty interior in $\Pi$, because
$\det(v_1,v_2, v_3, v_4)\neq 0$ as long as $1/p\neq 1/2$. Hence we may apply the multilinear trick to get
\[ \| Ef_1 \, Ef_2 \|_2^2\lesssim  \|f_1\|_{\frac{4(d+k)}{3d+k},4} \|f_2\|_{\frac{4(d+k)}{3d+k},4}  \|\bar f_1\|_{\frac{4(d+k)}{3d+k},4} \|\bar f_2\|_{\frac{4(d+k)}{3d+k},4}. \]
This completes the proof of the proposition.
\end{proof}


\section{ Transversality and the curvature conditions}

In this section we prove  several lemmas that will play crucial roles in proving Theorem \ref{thm}.  These lemmas are related to
the  curvature conditions.

For $R\gg 1$ and $\nu\in S_1\cup S_2$, we set
\[ \pi_{\nu} = \Big\{ (x,t) :  
\big |x + \bpar \sum_{j=1}^k  t_j \nabla \varphi_j (\nu) \parb\big| \le R^{1/2} \Big \}, \quad   R^\delta \pi_{\nu}=\pi_{\nu}+O(R^{\frac12+\delta}).
 \]
Here, for any set $A\subset \mathbb R^{d+k}$ and $\rho>0$, $A+O(\rho)=\{u\in R^{d+k}: \dist(u, A)\le C\rho \}$.
\begin{lemma}\label{intersection of plates}
Suppose that the vectors $\nabla\varphi_j (\nu_2)-\nabla\varphi_j (\nu_1)$, $1 \le j\le k$, are linearly independent  for all $\nu_1\in S_1$ and $\nu_2\in S_2$.   Then, there is a  constant $C$ such that
\[ \pi_{\nu_1}\cap \pi_{\nu_2} \subset B(0, CR^{1/2}).\]
\end{lemma}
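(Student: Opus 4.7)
\medskip

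The plan is to use the triangle inequality, the rank condition, and compactness. Suppose $(x,t) \in \pi_{\nu_1} \cap \pi_{\nu_2}$. First I would subtract the two defining inequalities: since
\[
\Bigl| x + \sum_{j=1}^k t_j \nabla\varphi_j(\nu_1) \Bigr| \le R^{1/2}, \qquad
\Bigl| x + \sum_{j=1}^k t_j \nabla\varphi_j(\nu_2) \Bigr| \le R^{1/2},
\]
the triangle inequality gives
\[
\Bigl| \sum_{j=1}^k t_j \bigl( \nabla\varphi_j(\nu_2) - \nabla\varphi_j(\nu_1) \bigr) \Bigr| \le 2 R^{1/2}.
\]
The left-hand side is precisely $|\mathbf D^t(\nu_1,\nu_2)\, t|$, where $t=(t_1,\ldots,t_k)^T$.

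Next, I would exploit the linear independence hypothesis together with compactness. Since $\mathbf D(\nu_1,\nu_2)$ has rank $k$ for all $(\nu_1,\nu_2) \in S_1 \times S_2$, the quantity
\[
c(\nu_1,\nu_2) := \inf_{|t|=1} |\mathbf D^t(\nu_1,\nu_2)\, t|
\]
is strictly positive for each $(\nu_1,\nu_2)$, and it is a continuous function of $(\nu_1,\nu_2)$ on the compact set $S_1 \times S_2$. Hence there exists $c > 0$ such that $|\mathbf D^t(\nu_1,\nu_2)\, t| \ge c\,|t|$ uniformly. Combining with the previous bound yields $|t| \le (2/c)\,R^{1/2}$.

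Finally, going back to either of the original inequalities,
\[
|x| \le \Bigl| x + \sum_{j=1}^k t_j \nabla\varphi_j(\nu_1) \Bigr| + |t|\,\max_{1\le j\le k}\,\sup_{\nu \in S_1 \cup S_2} |\nabla\varphi_j(\nu)| \lesssim R^{1/2},
\]
where the sup is finite since $\Phi$ is smooth on the compact set $I^d$. Together these give $|(x,t)| \le CR^{1/2}$, which is the desired conclusion.

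There is no real obstacle here; the only subtlety is ensuring the lower bound $|\mathbf D^t(\nu_1,\nu_2)\, t| \gtrsim |t|$ is \emph{uniform} in $(\nu_1,\nu_2) \in S_1 \times S_2$, which follows from continuity and compactness (and is in any case implicit in Remark~\ref{uniform}).
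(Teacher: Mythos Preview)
Your proof is correct and follows essentially the same approach as the paper: subtract the two defining inequalities to bound $|\mathbf D^t(\nu_1,\nu_2)\,t|$, invoke the rank-$k$ condition together with continuity and compactness to get a uniform lower bound $|\mathbf D^t(\nu_1,\nu_2)\,t|\gtrsim |t|$, and then feed the resulting bound on $|t|$ back into one of the original inequalities to control $|x|$. The paper's argument is identical up to notation.
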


\begin{proof} Since the set  $\{\nabla\varphi_j (\nu_2)-\nabla\varphi_j (\nu_1)\}_{j=1}^k$ is linearly independent  for all $\nu_1\in S_1$ and $\nu_2\in S_2$, the map $(t_1,\dots, t_k)\to (t_1,\dots, t_k)^t { \mathbf D}(\nu_1, \nu_2)$ is injective. So,    by continuity and compactness it follows  that there is  a constant $C$ such that, for  all $\nu_1\in S_1$ and $\nu_2\in S_2$,
\[ |(t_1,\dots, t_k)^t { \mathbf D}(\nu_1, \nu_2)|\ge C| (t_1,\dots, t_k)|. \] If $(x,t)\in  \pi_{\nu_1}\cap \pi_{\nu_2}$,  then $\big |x + \bpar \sum_{j=1}^k  t_j \nabla \varphi_j (\nu_i) \parb\big| \le R^{1/2} $ for $i=1,2$. This  gives $|(t_1,\dots, t_k)^t { \mathbf D}(\nu_1, \nu_2)|\le 2 R^{1/2}$.  Hence,  the above inequality yields
$|(t_1,\dots, t_k)|\le CR^{1/2}$. So, we also get  $|x|\le CR^{1/2}$.  This completes the proof.
\end{proof}

As it was already shown in \cites{L, V}, a simple transversality condition between the two wave packets is not enough to obtain a bilinear estimate beyond the range of the linear $L^2\to L^q$ estimate. So, we need to consider the Fourier supports of the wave packets to put a restriction on the permissible wave packets. This makes the geometry of the associated wave packets more favorable.

\smallskip

For given $\nu _1\in S_1$ and $\nu_2' \in S_2$ we define $\Pi_1^{\nu_1,\nu_2'}$ by
\begin{equation}\label{piDef}
\Pi_1^{\nu_1,\nu_2'} = \big\{ \nu_1' \in S_1: \nu_1'+\nu_2'-\nu_1 \in S_2,~ \Phi(\nu_1) + \Phi(\nu_1'+\nu_2'-\nu_1) = \Phi(\nu_1') + \Phi(\nu_2') \big\}.
\end{equation}
Since $\{\nabla\varphi_j (\nu_2)-\nabla\varphi_j (\nu_1)\}_{j=1}^k$ are linearly independent, by the implicit function theorem  we may assume  that $\Pi_1^{\nu_1,\nu_2'}$ is a smooth ($d-k$)-dimensional surface.\footnote{ We may need to assume
that $S_1$ and $S_2$ are small enough.}   We now set
\begin{equation*}
\Gamma_1^{\nu_1,\nu_2'}(R) = \bigcup_{\nu_1' \in \Pi_1^{\nu_1,\nu_2'}}  R^{\delta}\pi_{\nu_1'},
\end{equation*}
which is a $O(R^{\frac12+\delta})$  neighborhood of  the conical set with $k$ null directions.
The transversality between $\Gamma_1^{\nu_1,\nu_2'}$ and the opposite plates $\pi_{\nu_2}$  is important.  Such a transversality is made precise in the following (see Figure 1):

\begin{center}
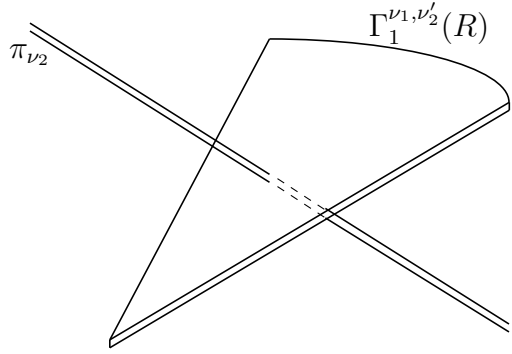
\begin{figure}
\begin{tikzpicture}[scale = 2.1]
	\draw[semithick] (2.5,0) arc (0:90:1.5 and 0.4); 
	\draw[semithick] (2.502,0) -- (0,-1.5);
	\draw[semithick] (2.502,-0.05) -- (0,-1.55);
	\draw[semithick] (2.502,0) -- (2.502,-0.05); 
	\draw[semithick] (0,-1.5) -- (0,-1.55); 
	\draw[semithick] (1,0.4) -- (0,-1.5); 
	\draw[semithick] (-0.5,0.5) -- (1,-0.455);
	\draw[semithick] (-0.5,0.45) -- (1,-0.505);
	\draw[semithick] (1.35,-0.67) -- (2.5,-1.4);
	\draw[semithick] (1.35,-0.73) -- (2.5,-1.45);
    \draw[dashed] (1.05,-0.482) -- (1.35,-0.68); 
    \draw[dashed] (1.05,-0.532) -- (1.35,-0.73);
    \draw (2,0.5) node {$\Gamma_1^{\nu_1,\nu_2'}(R)$};
    \draw (-0.5,0.3) node {$\pi_{\nu_2}$};
\end{tikzpicture}
\caption{Transversality when $k=1$ and $d=2$}
\end{figure}
\end{center}

\begin{lemma}\label{platetrans}
Let  $0<\delta\ll 1$, $u \in \mathbb R^{d+k}$ and set
\[  \widetilde \Gamma_1^{\nu_1,\nu_2'} (R, R^\delta)  =\big\{ (x,t) \in  \Gamma_1^{\nu_1,\nu_2'}(R):   R^{1-\delta}\le   |(x,t)|  \le CR     \big  \}.   \]
Suppose that the conditions \eqref{hessianinvertible} and \eqref{hessiannonvanishing} hold.
Then, if $S_1$ and $S_2$ are sufficiently small,  there exist a constant $C$, independent of $\nu_1,\nu_2'$,  $R$, and a vector $u \in\mathbb R^{d+k}$ such that for some $u'\in\mathbb R^{d+k}$,
\[  \widetilde \Gamma_1^{\nu_1,\nu_2'} (R, R^\delta) \cap \bpar R^\delta \pi_{\nu_2} +u\parb \subset B(u', CR^{\frac12+C\delta}).  \]
\end{lemma}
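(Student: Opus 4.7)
The plan is to unfold the two plate conditions as a pair of near-linear constraints, fix any one solution in the intersection as a reference point, and argue via the two curvature hypotheses that every other solution is within $CR^{1/2+C\delta}$ of the reference, with $u'$ the chosen reference. A point $(x,t) \in \widetilde \Gamma_1^{\nu_1,\nu_2'}(R,R^\delta) \cap (R^\delta\pi_{\nu_2}+u)$ corresponds to some $\nu_1' \in \Pi_1^{\nu_1,\nu_2'}$ with
\[
\Big|x + \sum_j t_j \nabla\varphi_j(\nu_1')\Big| \le CR^{1/2+\delta}, \qquad \Big|(x-u_x) + \sum_j(t_j - (u_t)_j)\nabla\varphi_j(\nu_2)\Big| \le CR^{1/2+\delta},
\]
where $u = (u_x, u_t) \in \mathbb R^d\times\mathbb R^k$. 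Subtracting eliminates $x$ and yields $\big|\, t^t \mathbf{D}(\nu_1',\nu_2) - w \,\big| \le CR^{1/2+\delta}$ with $w := u_x + u_t^t \nabla\Phi(\nu_2)$. If the intersection is empty the conclusion is vacuous; otherwise I would fix $(x_0,t_0)$ in it with associated parameter $\nu_0 \in \Pi_1^{\nu_1,\nu_2'}$ and take $u' := (x_0, t_0)$.

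Differencing the identity at $(\nu_1',t)$ and $(\nu_0,t_0)$ and Taylor-expanding $\mathbf{D}(\nu_1',\nu_2)$ around $\nu_0$ gives, modulo a quadratic Taylor error of size $|t|\,|\nu_1'-\nu_0|^2$ to be handled at the end,
\[
\mathbf{D}(\nu_0,\nu_2)^t (t - t_0) \;-\; \Big(\sum_j t_j H\varphi_j(\nu_0)\Big)(\nu_1' - \nu_0) = O(R^{1/2+\delta}).
\]
Projecting onto $\mathbf{N}(\nu_0,\nu_2)$, the orthonormal complement to the column span of $\mathbf{D}(\nu_0,\nu_2)^t$, annihilates the first term. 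Since $\nu_1',\nu_0 \in \Pi_1^{\nu_1,\nu_2'}$, the vector $\nu_1' - \nu_0$ lies, up to a quadratic error, in $T_{\nu_0}\Pi_1^{\nu_1,\nu_2'} = \ker \mathbf{D}(\nu_0,\tilde\nu_2)$ with $\tilde\nu_2 := \nu_0 + \nu_2' - \nu_1 \in S_2$. Choosing $S_1, S_2$ small forces $\tilde\nu_2$ close to $\nu_2$, so this kernel is close to the row span of $\mathbf{N}(\nu_0,\nu_2)^t$, and I may write $\nu_1'-\nu_0 = \mathbf{N}(\nu_0,\nu_2)^t \alpha + O(|\alpha|\,\mathrm{diam}\,S_2)$ with $\alpha\in\mathbb R^{d-k}$. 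The projected inequality then reads
\[
\Big|\mathbf{N}(\nu_0,\nu_2)\Big(\sum_j t_j H\varphi_j(\nu_0)\Big)\mathbf{N}(\nu_0,\nu_2)^t \alpha\Big| \le CR^{1/2+\delta}.
\]

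Now I would invoke the uniform form of \eqref{normalcondition} from Remark \ref{uniform}: for $|t|\sim 1$, the $(d-k)\times(d-k)$ matrix on the left has determinant bounded below, and by linearity in $t$ its smallest singular value is $\gtrsim |t|$ in general. Hence $|\alpha|\lesssim R^{1/2+\delta}/|t|$, and so $|\nu_1' - \nu_0| \lesssim R^{1/2+\delta}/|t|$. Plugging this back and using the uniform lower bound on the smallest singular value of $\mathbf{D}(\nu_0,\nu_2)^t$ from \eqref{hessiannonvanishing2}, we get $|t - t_0| \lesssim R^{1/2+\delta}$; then from the second plate bound $|x - x_0| \lesssim |t-t_0| + CR^{1/2+\delta} \lesssim R^{1/2+\delta}$. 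The size constraint $|(x,t)| \ge R^{1-\delta}$ combined with $|x| \lesssim |t| + CR^{1/2+\delta}$ from the first plate forces $|t|\gtrsim R^{1-\delta}$, so $|\nu_1'-\nu_0| \lesssim R^{-1/2+2\delta}$, and the quadratic Taylor error $|t|\,|\nu_1'-\nu_0|^2 \lesssim R \cdot R^{-1+4\delta} = R^{4\delta}$ is harmless against $R^{1/2+\delta}$ for $\delta$ small. This places $(x,t)$ in $B(u', CR^{1/2+C\delta})$ for an appropriate absolute $C$.

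The main obstacle is the non-degeneracy step above: the tangent space of $\Pi_1^{\nu_1,\nu_2'}$ at $\nu_0$ is defined via the differenced gradients at $(\nu_0,\tilde\nu_2)$, whereas the quadratic form whose invertibility is asserted by \eqref{normalcondition} is built from $\mathbf{N}(\nu_0,\nu_2)$, with $\tilde\nu_2\ne \nu_2$ in general. Bridging this mismatch requires both $S_1, S_2$ to be chosen sufficiently small so that the determinant in \eqref{normalcondition}, which is continuous in all of its arguments, remains uniformly bounded below under the small perturbation $\tilde\nu_2 \leftrightarrow \nu_2$; this is precisely the kind of uniform quantitative bound made available by Remark \ref{uniform}.
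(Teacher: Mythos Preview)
Your argument is correct in substance and lands on the same linear-algebraic core as the paper, namely the nondegeneracy of $\mathbf N\big(\sum_j t_jH\varphi_j\big)\mathbf N^t$ (equivalently, of the block determinant the paper assembles in \eqref{inter}). The paper packages this differently: it rescales by $1/R$, dyadically decomposes in $|t|$ to reduce to the shell $|t|\sim 1$, and then shows that the tangent space of the conical surface $\Phi_1^{\nu_1,\nu_2'}(\Pi_1^{\nu_1,\nu_2'}\times\{|t|\sim1\})$ is uniformly transversal to the $k$-plane direction of $\pi_{\nu_2}$, computing the determinant of the $(d+k)\times(d+k)$ matrix of tangent vectors \eqref{tanV1}, \eqref{tanV2}, \eqref{lvec}. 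Your direct-difference route skips the rescaling and dyadic localization and works at scale $R$ throughout, tracking the $|t|$-dependence by hand; this is more computational but equally valid, and your identification of the mismatch $\tilde\nu_2\neq\nu_2$ as the main obstacle matches exactly what the paper does when it replaces $\mathbf D(\nu_0,\nu_2)$ by $\mathbf D(\nu_0,\tilde\nu_2)$ in the proof.

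There is, however, a circularity in how you close the quadratic Taylor error. You set aside the term $|t|\,|\nu_1'-\nu_0|^2$ ``to be handled at the end'', derive $|\nu_1'-\nu_0|\lesssim R^{-1/2+2\delta}$ from the simplified projected inequality, and only then verify that the deferred error is $\lesssim R^{4\delta}$. But that bound on $|\nu_1'-\nu_0|$ was obtained \emph{ignoring} the very error you are now estimating, so the loop is not closed. The fix is to absorb rather than defer: since $|\nu_1'-\nu_0|\le\mathrm{diam}\,S_1$ holds a priori, the full projected inequality reads
\[
c\,|t|\,|\nu_1'-\nu_0|\;\le\; CR^{1/2+\delta}+C\,|t|\,|\nu_1'-\nu_0|\cdot\big(\mathrm{diam}\,S_1+\mathrm{diam}\,S_2\big),
\]
and for $S_1,S_2$ sufficiently small the last term is absorbed into the left side, giving $|t|\,|\nu_1'-\nu_0|\lesssim R^{1/2+\delta}$ directly. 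With this correction your a posteriori check becomes unnecessary and the rest of your argument goes through.
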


Note that the set $\widetilde \Gamma_1^{\nu_1,\nu_2'} (R, R^\delta)$ can be represented as a $O(R^{\frac12+\delta})$ neighborhood of a surface.
Let us define the map  $\Phi_1^{\nu_1,\nu_2'}: \Pi_1^{\nu_1,\nu_2'}\times  \mathbb R^{k}\to  \mathbb R^{d+k}  $ by
\begin{align*}
\Phi_1^{\nu_1,\nu_2'} (\nu, t)=\Big(-\sum_{j=1}^k t_j \nabla\varphi_j(\nu), t\Big) .
\end{align*}
Then it is easy to see that
\[  \widetilde \Gamma_1^{\nu_1,\nu_2'} (R, R^\delta)\subset
 \Big\{ \Phi_1^{\nu_1,\nu_2'} (\nu, t):   \nu\in \Pi_1^{\nu_1,\nu_2'},\,\, cR^{1-\delta}\le |t|\le  C R\Big\} + O(R^{\frac12+\delta}).  \]

\begin{proof} After scaling it is sufficient to show that the intersection of the two sets
\[ \Gamma_1=\Big\{ \Phi_1^{\nu_1,\nu_2'} (\nu, t):   \nu\in \Pi_1^{\nu_1,\nu_2'},\,\, R^{-\delta}\le |t|\le C\Big\} +O(R^{-\frac12+\delta}) \]
and
\[ \mathfrak C_2(R^{-\frac12+\delta})= \Big\{ \big(-\sum_{j=1}^k  t_j \nabla \varphi_j (\nu_2),t\big) :  |t|\le  C \Big \} + \widetilde u+ O(R^{-\frac12+\delta}) \]
is contained in a ball of radius $CR^{-\frac12+ C\delta}$. {{}  For $j\ge -C$, let us set
\[ \Gamma_1^j(R^{-\frac12+\delta})=\Big\{ \Phi_1^{\nu_1,\nu_2'} (\nu, t):   \nu\in \Pi_1^{\nu_1,\nu_2'},\,\, 2^{-j-1}\le |t|\le 2^{-j}\Big\} +O(R^{-\frac12+\delta}).\]
Using  homogeneity and a dyadic decomposition in $t$ for $\Gamma_1$, the matter can be reduced to  the case  $2^{-1}\le |t|\le 1$.  That is to say,
\begin{equation}\label{intersection}
\Gamma_1^0(R^{-\frac12+\delta}) \cap \mathfrak C_2(R^{-\frac12+\delta})\subset B(u, C_0 R^{-\frac12+\delta})
\end{equation}
for some $u$ and $C_0>0$. In fact,  using scaling we see \eqref{intersection} implies  that
$\Gamma_1^j (R^{-\frac12+\delta}) \cap \mathfrak C_2(R^{-\frac12+\delta})$ is contained in
 a ball of radius $C_0R^{-\frac12+\delta}$.\footnote{Here we change variables $(x,t) \to 2^{-j}(x,t)$, apply \eqref{intersection}, and reverse the change of variables.}   Since
 $\Gamma_1\subset\cup_{2^{-1}R^{-\delta}\le 2^{j}\le C}\Gamma_1^j$,  $\Gamma_1 \cap \mathfrak C_2(R^{-\frac12+\delta})$ is contained in the union of as many as $\sim\log R$ such balls of radius $C_0R^{-\frac12+\delta}$.  This union of balls is obviously contained in  a ball of radius $CR^{-\frac12+ C\delta}$ since the set  $\Gamma_1 \cap \mathfrak C_2(R^{-\frac12+\delta})$ is connected. }

 Since we may assume that $S_1$ and $S_2$ are sufficiently small, in order to show \eqref{intersection}  it is enough to show that the tangent spaces of  the surfaces
$\Phi_1^{\nu_1,\nu_2'}: \Pi_1^{\nu_1,\nu_2'}\times  \{2^{-1}\le |t|\le 1\}\to \mathbb R^{d+k}$ and
$\big\{ \big(\sum_{j=1}^k  t_j \nabla \varphi_j (\nu_2),t\big) :  |t|\le C  \big \}$ are uniformly transversal to each other.
In fact, since all the underlying sets are compact,  by continuity it is enough to check this  at each point.

Let   $u_0=\Phi_1^{\nu_1,\nu_2'} (\nu_{0}, t_0)$ for $\nu_{0}\in \Pi_1^{\nu_1,\nu_2'}$ and  $2^{-1}\le |t_0| \le 1$.
Let $\mathbf v_{1}, \cdots, \mathbf v_{d-k}$ be orthonormal vectors spanning the tangent space  $ T_{\nu_{0}}\Pi_1^{\nu_1,\nu_2'}$.
Then the tangent space of the parametrized surface $\Phi_1^{\nu_1,\nu_2'}: \Pi_1^{\nu_1,\nu_2'}\times  \{2^{-1}\le |t|\le 1\}\to \mathbb R^{d+k}$ at $u_0$ is spanned by  the vectors
\begin{gather}
\label{tanV1}
(\nabla \varphi_1(\nu_{0}),-1,0,\ldots,0),~ (\nabla \varphi_2(\nu_{0}),0,-1,0,\ldots,0),~ \cdots,~ (\nabla \varphi_k(\nu_{0}),0,\ldots,0,-1)
\intertext{and}
\label{tanV2}
\Big(\mathbf v_i \bpar \sum_{j=1}^{k}  t_{0,j} H \varphi_j(\nu_{0}) \parb,0,\ldots,0    \Big),\quad  i=1,\cdots,d-k.
\end{gather}
On the other hand, the $k$-dimensional plane $\big\{ \big(-\sum_{j=1}^k  t_j \nabla \varphi_j (\nu_2),t\big) :  |t|\le C \big \}$  is spanned by
\begin{equation} \label{lvec}
(\nabla \varphi_1(\nu_2),-1,0,\ldots,0),~ (\nabla \varphi_2(\nu_2),0,-1,0,\ldots,0), \cdots, (\nabla \varphi_k(\nu_2),0,\ldots,0,-1).
\end{equation}
Hence it {suffices} to show that these $d+k$ vectors are linearly independent, or equivalently that the
determinant of the matrix with these vectors as row vectors is nonzero.  After {Gaussian} elimination it is enough to show
\begin{equation}\label{inter}
 \det \begin{pmatrix}
 &   \mathbf V \big(\sum_{j=1}^{k}  t_{0,j} H \varphi_j(\nu_{0})\big)    \\
 &  \mathbf D  (\nu_{0},  \nu_{2})
\end{pmatrix}\neq 0
\end{equation}
where $\mathbf V$ is the  $(d-k)\times d$ matrix having $\mathbf v_{1}, \cdots, \mathbf v_{d-k}$ as its row vectors. Now by \eqref{piDef} we note that the vectors $\mathbf v_{1}, \cdots, \mathbf v_{d-k}$  are orthogonal to the span of the vectors
\[
\nabla \varphi_j(\nu_0+\nu_2'-\nu_1) -\nabla \varphi_j(\nu_0), \quad j=1,\dots, k. \]
Under the assumption that $S_2$ is  small enough, we may replace $ \mathbf D  (  \nu_{0},\nu_{2})$ with
$ \mathbf D  ( \nu_{0},\nu_0+\nu_2'-\nu_1)$. For simplicity we set $\widetilde \nu_2=\nu_0+\nu_2'-\nu_1$.\footnote{We may assume that there is a $c>0$ such that $\big|\det \big[ \mathbf N(\nu_2, \nu_1)\big( \sum_{i=1}^k t_i H\varphi_i(\nu)\big) \mathbf N^t(\nu_2,  \nu_1)\big]\big|\,
	>c$ for $\nu_1\in S_2$ and $\nu_2\in S_2$ (see Remark \ref{uniform}).}   Since $\big(\sum_{j=1}^{k}  t_{0,j} H \varphi_j(\nu_{0})\big)$ is invertible, we need only {show} that
\[\det \mathbf A\neq 0,\] where
\[  \mathbf A=\begin{pmatrix}
&    \mathbf V         \\
& \mathbf D  (\nu_0, \widetilde \nu_2) \big(\sum_{j=1}^{k}  t_{0,j} H \varphi_j(\nu_{0})\big)^{-1}
\end{pmatrix}. \]
{{} Since $\mathbf V \mathbf D^{t} (\nu_0, \widetilde \nu_2)=0$, we  note that
the matrix $\mathbf A \begin{pmatrix} \mathbf V^t & \mathbf D^t (\nu_0, \widetilde \nu_2) \end{pmatrix}$ equals
\[ \begin{pmatrix}
&    I_{d-k}  & 0        \\
& \mathbf D  (\nu_0, \widetilde \nu_2) \big(\sum_{j=1}^{k}  t_{0,j} H \varphi_j(\nu_{0})\big)^{-1}   V^t &    \mathbf D  (\nu_0, \widetilde \nu_2) \big(\sum_{j=1}^{k}  t_{0,j} H \varphi_j(\nu_{0})\big)^{-1}   \mathbf D^t (\nu_0, \widetilde \nu_2)
\end{pmatrix}. \]}
This matrix is clearly invertible thanks to \eqref{hessiannonvanishing}. Hence, so is the matrix $\mathbf A$.  This completes the proof.
\end{proof}


In what follows we show that the following version of Lemma \ref{platetrans} holds, where we assume \eqref{normalcondition} instead of \eqref{hessiannonvanishing}, dropping the condition \eqref{hessianinvertible}.

\begin{lemma} \label{platetrans2} Suppose that, for any $\nu_1\in S_1$,  $\nu_2\in S_2$, $\nabla \varphi_i(\nu_2) -  \nabla \varphi_i(\nu_1),$ $ i=1,\dots, k$ are linearly independent and \eqref{normalcondition} holds for $\nu_1 \in S_1$, $\nu_2 \in S_2$,  $|t|=1$ and for $\nu=\nu_1, \nu_2$.   If $S_1$ and $S_2$ are sufficiently small,  there is a constant $C$, independent of $\nu_1,\nu_2'$,  $R$, and $u$ such that, for some $u'\in\mathbb R^{d+1}$,
\[  \widetilde \Gamma_1^{\nu_1,\nu_2'} (R, R^\delta) \cap \bpar R^\delta \pi_{\nu_2} +u\parb \subset B(u', CR^{\frac12+C\delta}).  \]
\end{lemma}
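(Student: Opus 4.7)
The plan is to follow the proof of Lemma \ref{platetrans} step by step, modifying only the final linear-algebra step where \eqref{hessianinvertible} was used. All the preliminary reductions there (scaling, dyadic decomposition in $|t|$, passage from \eqref{intersection} to a pointwise transversality assertion via continuity and compactness) rely only on the linear independence of $\{\nabla\varphi_j(\nu_2)-\nabla\varphi_j(\nu_1)\}_{j=1}^k$ and on $S_1,S_2$ being small, both of which are present in the hypothesis of Lemma \ref{platetrans2}. I would run them verbatim. After the Gaussian elimination leading to \eqref{inter}, the task becomes the following: with $H=\sum_{j=1}^k t_{0,j}H\varphi_j(\nu_0)$, $\widetilde\nu_2:=\nu_0+\nu_2'-\nu_1$, $\mathbf D:=\mathbf D(\nu_0,\widetilde\nu_2)$, and $\mathbf V$ a $(d-k)\times d$ matrix whose rows are any orthonormal basis of $T_{\nu_0}\Pi_1^{\nu_1,\nu_2'}$, show that
\[
\det\mathbf B\neq 0,\qquad \mathbf B:=\begin{pmatrix}\mathbf V H\\ \mathbf D\end{pmatrix}.
\]

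The main obstacle is that, without \eqref{hessianinvertible}, we can no longer multiply by $H^{-1}$ to reduce matters to \eqref{hessiannonvanishing}, as was done in Lemma \ref{platetrans}. I would sidestep this using the block-matrix identity already invoked in the footnote immediately after \eqref{normalcondition}. By the defining equation of $\Pi_1^{\nu_1,\nu_2'}$ in \eqref{piDef}, the rows of $\mathbf V$ are orthogonal to $\nabla\varphi_j(\widetilde\nu_2)-\nabla\varphi_j(\nu_0)$ for every $j$, so $\mathbf V\mathbf D^t=0$; equivalently, $\mathbf V$ is precisely a choice of the matrix $\mathbf N(\widetilde\nu_2,\nu_0)$. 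Since $\mathbf V$ and $\mathbf D$ have full row rank and their row spaces are orthogonal complements in $\mathbb R^d$, the $d\times d$ matrix $\begin{pmatrix}\mathbf V^t & \mathbf D^t\end{pmatrix}$ is invertible, and right-multiplication by it sends $\mathbf B$ to the block upper triangular matrix
\[
\mathbf B\begin{pmatrix}\mathbf V^t & \mathbf D^t\end{pmatrix}=\begin{pmatrix}\mathbf V H\mathbf V^t & \mathbf V H\mathbf D^t\\ 0 & \mathbf D\mathbf D^t\end{pmatrix},
\]
whose determinant equals $\det(\mathbf V H\mathbf V^t)\cdot\det(\mathbf D\mathbf D^t)$.

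The factor $\det(\mathbf D\mathbf D^t)$ is nonzero because $\mathbf D$ has rank $k$. For the first factor, note that $\det(\mathbf V H\mathbf V^t)$ is homogeneous of degree $d-k$ in $t_0$, so rescaling $t_0\to t_0/|t_0|$ (permissible since $|t_0|\sim 1$ after the dyadic reduction) and applying the hypothesis \eqref{normalcondition} with $\nu_1\leftarrow \nu_0\in S_1$, $\nu_2\leftarrow \widetilde\nu_2\in S_2$, and $\nu=\nu_0$ yields $\det(\mathbf V H\mathbf V^t)\neq 0$. Hence $\det\mathbf B\neq 0$, giving the desired pointwise transversality; a standard compactness/continuity argument, as encapsulated by Remark \ref{uniform}, upgrades this to a uniform bound over the parameter set and completes the proof.
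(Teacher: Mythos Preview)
Your proposal is correct and follows essentially the same route as the paper's own proof: both reduce to the determinant in \eqref{inter} (with $\mathbf D(\nu_0,\nu_2)$ replaced by $\mathbf D(\nu_0,\widetilde\nu_2)$), then right-multiply by the nonsingular matrix $\begin{pmatrix}\mathbf V^t & \mathbf D^t\end{pmatrix}$ to obtain a block upper-triangular matrix whose determinant factors as $\det(\mathbf V H\mathbf V^t)\det(\mathbf D\mathbf D^t)$, and conclude from \eqref{normalcondition}. Your additional remarks on homogeneity in $t_0$ and the identification $\mathbf V=\mathbf N(\widetilde\nu_2,\nu_0)$ are correct elaborations but do not alter the argument.
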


\begin{proof}
It is sufficient to show that \eqref{inter} holds. As before,  under the assumption that $S_2$ is  small enough, replacing $ \mathbf D  (  \nu_{0},\nu_{2})$ with
$ \mathbf D  ( \nu_{0},\widetilde \nu_2)$,  $\widetilde \nu_2=\nu_0+\nu_2'-\nu_1$.  We need only {show} that
\begin{equation*}
\det \begin{pmatrix}
& \mathbf V \big(\sum_{j=1}^{k}  t_{0,j} H\varphi_j(\nu_{0})\big) \\
&\mathbf D(\nu_{0},\widetilde\nu_{2})
\end{pmatrix}\neq 0.
\end{equation*}
Since vectors $\mathbf v_{1}, \cdots, \mathbf v_{d-k}$  are orthogonal to the row vectors of $\mathbf D(\nu_0, \widetilde \nu_2)$,
by multiplying  the nonsingular matrix  $(\mathbf V^t, \mathbf D^t(\nu_0, \widetilde \nu_2))$  to the matrix inside {the} determinant from the right, we see that the above is equivalent to
\begin{equation*}
\det \begin{pmatrix}
& \mathbf V \big(\sum_{j=1}^{k}  t_{0,j} H \varphi_j(\nu_{0})\big) \mathbf V^t  &  \mathbf V \big(\sum_{j=1}^{k}  t_{0,j} H \varphi_j(\nu_{0})\big)  \mathbf D^t  (\nu_{0}, \widetilde \nu_{2})    \\
&0    &  \mathbf D  (\nu_{0}, \widetilde \nu_{2}) \mathbf D^t  (\nu_{0}, \widetilde \nu_{2})
\end{pmatrix}\neq 0.
\end{equation*}
Since the matrix $\mathbf D(\nu_{0}, \widetilde \nu_{2}) \mathbf D^t  (\nu_{0}, \widetilde\nu_{2})$ is nonsingular, it is clear that the above is equivalent to $\det[\mathbf V \big(\sum_{j=1}^{k}  t_{0,j} H \varphi_j(\nu_{0})\big) \mathbf V^t]\neq 0 $, which is  the condition \eqref{normalcondition}.
\end{proof}


\section{Proof of Theorem \ref{thm}}

In this section we will prove Theorem \ref{thm}. Our proof is similar to that in \cite{L} (also see \cite{T}).
To prove Theorem \ref{thm}, we need only show that, for $p>\frac{d+3k}{d+k}$,
\begin{equation*}
\| E_1f \, E_2g \|_{p} \le C \|f\|_{2}\|g\|_{2}
\end{equation*}
since we can obtain the desired conclusion by interpolating this estimate with the trivial estimate
$
\| E_1f \, E_2g \|_{\infty} \le \|f\|_{1}\|g\|_1.
$
By an $\epsilon$-removal argument  \cites{TV1, BG},  it is sufficient to show that  \eqref{mainbi} holds for any $\alpha>0$.
In fact, by the assumption  that  $\sum_{ j=1}^{k} t_{j} H\varphi_{j} (\nu)$ is nonsingular for $\nu\in \supp f\cup \supp g$ as long as $|t|=1$, it follows that
\[|E_\kappa (a_\kappa )(x,t)| \lesssim (|x|+|t|)^{-\frac{d}{2}}, \quad \kappa=1, 2,\]
where $a_1,$ $a_2$ are smooth bump functions which vanish on the supports of $f$ and $g$, respectively.  This can be shown by the stationary phase method.
Hence,  the arguments in \cites{TV1, BG} work here without modification.

\begin{prop}\label{prop}  Let $0 < \delta \ll 1$.
If \eqref{mainbi} holds, then  for any $\epsilon>0$
\begin{equation} \label{mainbi2}
\| E_1f \, E_2g \|_{L^\frac{d+3k}{d+k}(Q_R)} \le C_{\epsilon}  R^{\max (\alpha(1-\delta), C\delta)+\epsilon} \|f\|_{2}\|g\|_{2}
\end{equation}
with $C$ {independent} of $\delta$.
\end{prop}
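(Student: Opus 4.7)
\emph{Plan.} This is a bootstrap / induction-on-scales step: the assumed bound \eqref{mainbi} at the smaller scale $R^{1-\delta}$ will be applied on a family of sub-cubes of $Q_R$ of sidelength $R^{1-\delta}$, and the resulting local estimates reassembled using the transversality lemmas of Section~3 at the cost of a polynomial factor $R^{C\delta}$.

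\emph{Wave-packet decomposition and local induction.} Partition $S_1,S_2$ into essentially disjoint caps $\tau_\nu$ of diameter $\sim R^{-1/2}$ centered at $\nu$, and write $f=\sum_{\nu_1}f_{\nu_1}$, $g=\sum_{\nu_2}g_{\nu_2}$ via a smooth partition of unity subordinate to the caps. By a standard non-stationary phase argument (integration by parts), $E_i f_{\nu_i}$ is, up to $O_N(R^{-N})$ tails, concentrated on the $R^\delta$-fattened plate $R^\delta\pi_{\nu_i}$. Tile $Q_R$ by cubes $B$ of sidelength $R^{1-\delta}$; for each $B$ set
\[
\mathcal J_i(B)=\{\nu\in S_i: R^\delta\pi_\nu\cap B\neq\emptyset\},\quad f^B=\sum_{\nu_1\in\mathcal J_1(B)}f_{\nu_1},\quad g^B=\sum_{\nu_2\in\mathcal J_2(B)}g_{\nu_2}.
\]
Modulo a Schwartz tail of size $O_N(R^{-N})\|f\|_2\|g\|_2$, we have $E_1f\cdot E_2g\approx E_1f^B\cdot E_2g^B$ on $B$, and applying the hypothesis \eqref{mainbi} at scale $R^{1-\delta}$ yields, with $q=\frac{d+3k}{d+k}$,
\[
\|E_1f\cdot E_2g\|_{L^q(B)}\lesssim C_\alpha R^{\alpha(1-\delta)}\,\|f^B\|_2\|g^B\|_2.
\]

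\emph{Summation via transversality.} Raise to the $q$-th power and sum over $B$. Using the orthogonality $\|f^B\|_2^2=\sum_{\nu_1\in\mathcal J_1(B)}\|f_{\nu_1}\|_2^2$, H\"older's inequality, and the double-counting estimate $\#\{B:\nu_1\in\mathcal J_1(B),\nu_2\in\mathcal J_2(B)\}\le R^{C\delta}$---which follows from Lemma \ref{intersection of plates}, since the two fattened plates $R^\delta\pi_{\nu_1}$, $R^\delta\pi_{\nu_2}$ intersect in a ball of radius $\sim R^{1/2+C\delta}$ and hence meet at most $R^{C\delta}$ many cubes of side $R^{1-\delta}$---one reassembles $\|f\|_2\|g\|_2$ at the overall cost $R^{C\delta}$. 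To secure the $\max$ in the exponent (and not the much weaker additive loss $\alpha(1-\delta)+C\delta$, which would prevent the iteration from driving $\alpha$ to zero), I split into two regimes by pigeonholing on the effective number of plate pairs contributing to each $B$: in the ``few plates per $B$'' regime, a direct Cauchy--Schwarz / $L^2$ bound combined with the transversality gives $R^{C\delta}$ without invoking the induction hypothesis at all; in the ``many plates per $B$'' regime the induction hypothesis is the dominant input and contributes $R^{\alpha(1-\delta)}$. Taking the worse of the two cases and summing dyadically absorbs the logarithmic losses into the final $R^\epsilon$.

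\emph{Main obstacle.} The central difficulty is executing the dichotomy of the last step cleanly so that the exponent becomes $\max(\alpha(1-\delta),C\delta)$ rather than the sum $\alpha(1-\delta)+C\delta$; only the maximum allows iterating the proposition to drive $\alpha$ below any prescribed threshold. The key geometric inputs are Lemma \ref{intersection of plates} and Lemma \ref{platetrans}, which respectively control the intersection of two plates and the intersection of a plate with the conical union $\Gamma_1^{\nu_1,\nu_2'}$; these are what keep the combinatorial overhead at $R^{C\delta}$ independently of the induction constant, and therefore make the trade-off available.
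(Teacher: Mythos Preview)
Your proposal has the right high-level picture—cover $Q_R$ by sub-cubes of side $R^{1-\delta}$, apply the hypothesis locally, and use transversality to control the reassembly—but two genuine gaps prevent it from working as written.

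First, your wave-packet decomposition is incomplete. You localize only in frequency (caps $\nu$), not in space. The claim that $E_i f_{\nu_i}$ is concentrated on the single plate $R^\delta\pi_{\nu_i}$ is false: $f_{\nu_i}=\zeta_{\nu_i}f$ is merely an $L^2$ function, so integration by parts against it is unavailable, and even for smooth data $Ef_{\nu_i}$ is spread over the \emph{union} of translated plates $\pi_{\ell,\nu_i}=\pi_{\nu_i}+(\ell,0)$, one for each spatial scale-$R^{1/2}$ piece of $\mathcal F^{-1} f_{\nu_i}$. The paper's decomposition (Lemma~\ref{packetdecomp}) carries both indices $(\ell,\nu)$, writing $Ef=\sum_{(\ell,\nu)}c_{\ell,\nu}P_{\ell,\nu}$ with each $P_{\ell,\nu}$ concentrated on a single $\pi_{\ell,\nu}$. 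Without this your reduction $E_1f\cdot E_2g\approx E_1f^B\cdot E_2g^B$ on $B$ fails, and your double-counting bound (stated for plates through the origin) loses its meaning once translates are present.

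Second, your dichotomy ``few plates / many plates per $B$'' is a restatement of the obstacle, not a mechanism for overcoming it. The paper's device is different: after pigeonholing on multiplicities $\rho_1,\rho_2,\lambda_1,\lambda_2$, one assigns to each wave packet $\omega$ a preferred cube $B^*(\omega)$ (the $B$ containing the most $R^{1/2}$-cubes $q$ meeting $\pi_\omega$) and sets $\omega\sim B$ iff $B$ is near $B^*(\omega)$. The $\sim$ part uses the induction hypothesis, with the sum over $B$ controlled because each $\omega$ is $\sim$ to $O(1)$ cubes. The $\nsim$ part is bounded \emph{without} the induction hypothesis by interpolating a trivial $L^1$ bound against an $L^2$ bound; the $L^2$ bound is the heart of the matter. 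It uses Plancherel to reduce to the combinatorial estimate \eqref{finalE}, and it is precisely here—counting the $\omega_1'\in\mathcal W_1^{\nsim B}(q)$ whose direction $\nu_1'$ lies on the $(d-k)$-surface $\Pi_1^{\nu_1,\nu_2'}$—that Lemma~\ref{platetrans} enters, together with the observation that $\omega_1\nsim B$ forces a definite fraction of its $q$-cubes to lie at distance $\gtrsim R^{1-\delta}$ from $B$. Lemma~\ref{intersection of plates} alone, which is essentially your only combinatorial input, does not suffice for this step.
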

\noindent By iterating finitely many times  the implication in  Proposition \ref{prop}, we can easily obtain the
estimate \eqref{mainbi} for any $\alpha>0$.

\subsection{Wave packet decomposition}
In this section we decompose the function $Ef$ into wave packets.
Let $R \gg 1$. We denote by \[\mathcal L=\mathcal L(R) := R^{1/2} \mathbb Z^{d}, \quad \mathcal V =\mathcal V(R) := R^{-1/2} \mathbb Z^d.\] Let $\psi$ be a nonnegative Schwartz function such that
$\widehat \psi$ is supported on $B(0,1)$ and $\sum_{k \in \mathbb Z^d} \psi(\cdot - k) =1$. Also, let
$\zeta$ be a smooth function supported on $B(0,1)$ and $\sum_{k \in \mathbb Z^d} \zeta(\cdot - k) =1.$

For $\ell\in \mathcal L$, $\nu\in \mathcal V$ we set $ \psi_{\ell}(x) := \psi(\frac{x-\ell}{R^{1/2}}),$ $\zeta_{\nu}(\xi)=:\zeta(R^{1/2}(\xi- \nu)), $
and for a given function $f$, we  define $f_{\ell, \nu}$ by
\[
f_{\ell, \nu}= \mathcal F\big( \psi_{\ell} \mathcal F^{-1} (\zeta_{\nu} f)\big),
\]
where $\mathcal F$, $\mathcal F^{-1}$ denote Fourier transform and the inverse Fourier transform, respectively.  Then, it follows that
$
f = \sum_{\nu \in \mathcal V} \sum_{\ell \in \mathcal L} f_{\ell, \nu}.
$
Hence we may write
\begin{equation} \label{eqn:WPD}
Ef = \sum_{\nu \in \mathcal V} \sum_{\ell \in \mathcal L}
E f_{\ell,\nu}.
\end{equation}

\begin{lemma} \label{lem:asy}
If $|t| \lesssim  R $, then
\begin{equation}\label{wavepacketdecay}
|Ef_{\ell,\nu}(x,t)| \le C_{N} \, \Big( 1 + R^{-\frac12}\big|x-\ell + \sum_{j=1}^k t_j \nabla\varphi_j(\nu)\big|\Big)^{-N}M (\mathcal F^{-1} (\zeta_{\nu} f))(\ell)
\end{equation}
for all $N \ge 0$. Here, $Mf$ is the Hardy-Littlewood maximal function of $f$.
\end{lemma}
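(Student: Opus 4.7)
The plan is to expand $Ef_{\ell,\nu}$ as a convolution in the physical variable against an explicit kernel and then exploit the rapid decay of that kernel by non-stationary phase. First I would unwind the definition: set $g = \mathcal F^{-1}(\zeta_\nu f)$ so that $f_{\ell,\nu} = \mathcal F(\psi_\ell g)$. Since $\zeta_\nu f$ and $\widehat\psi_\ell$ are supported in balls of radius $\lesssim R^{-1/2}$, the function $f_{\ell,\nu}$ is frequency-supported in $B(\nu, CR^{-1/2})$, so I may insert a smooth cutoff $\tilde\chi_\nu$ equal to $1$ there and write
\[
Ef_{\ell,\nu}(x,t) = \int \psi_\ell(y)\, g(y)\, K(x-y,t)\, dy, \qquad K(z,t) := \int e^{2\pi i(z\cdot\xi + t\cdot\Phi(\xi))} \tilde\chi_\nu(\xi)\, d\xi.
\]

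Next I would estimate $K(z,t)$. After the change of variables $\xi = \nu + R^{-1/2}\eta$ and a Taylor expansion of $\Phi$ around $\nu$, the phase splits as the linear term $2\pi R^{-1/2}\bigl(z + \sum_j t_j \nabla\varphi_j(\nu)\bigr)\cdot\eta$ plus a remainder whose $\eta$-derivatives of any order are $O(R^{-1}|t|) + O(R^{-3/2}|t|)$ on the (bounded) support of $\tilde\chi_\nu$. The hypothesis $|t|\lesssim R$ is exactly what makes this remainder and all its $\eta$-derivatives uniformly $O(1)$, so the linear term dominates whenever $|z + \sum_j t_j \nabla\varphi_j(\nu)| \gg R^{1/2}$. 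Integration by parts in $\eta$ then gives, for every $N\ge 0$,
\[
|K(z,t)| \lesssim R^{-d/2}\Bigl(1 + R^{-1/2}\bigl|z + \sum_j t_j \nabla\varphi_j(\nu)\bigr|\Bigr)^{-N}.
\]

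Finally I would combine the two bounds. Setting $W := R^{-1/2}(x - \ell + \sum_j t_j \nabla\varphi_j(\nu))$ and changing variables $y = \ell + R^{1/2}s$, the Schwartz decay of $\psi_\ell$ gives $|\psi_\ell(\ell+R^{1/2}s)| \lesssim (1+|s|)^{-M}$, while the kernel bound reads $|K(x-y,t)| \lesssim R^{-d/2}(1+|W-s|)^{-M}$. Using the elementary inequality $(1+|s|)(1+|W-s|) \ge 1+|W|$ to peel off a factor $(1+|W|)^{-N}$, the remaining $s$-integral is $|g|$ convolved at $\ell$ with the rescaled $L^1$ kernel $R^{-d/2}(1+|s|)^{-(d+1)}$ which, upon changing back to $y$, is majorized by $M g(\ell) = M(\mathcal F^{-1}(\zeta_\nu f))(\ell)$. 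Unpacking $W$ yields the claimed bound.

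The delicate step, which I expect to be the main obstacle, is the bookkeeping inside the non-stationary phase for $K$: one must verify that after the $R^{-1/2}$-rescaling the quadratic piece $R^{-1}\sum_j t_j \eta^t H\varphi_j(\nu)\eta$ and all higher-order remainders in $\eta$ remain uniformly $O(1)$ together with their derivatives, so that each integration by parts genuinely gains the factor $R^{-1/2}/|z + \sum_j t_j \nabla\varphi_j(\nu)|$ with a constant independent of $t$ in the full regime $|t|\lesssim R$. The remaining reductions, including the convolution-to-maximal-function step, are routine.
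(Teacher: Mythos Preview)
Your proposal is correct and follows essentially the same route as the paper: write $Ef_{\ell,\nu}$ as a convolution against the kernel $K$, rescale $\xi=\nu+R^{-1/2}\eta$, use $|t|\lesssim R$ to control the Taylor remainder, and integrate by parts to obtain the stated decay for $K$. The paper stops after the kernel bound and simply refers to \cite{L} for the ``standard argument'' passing from the kernel estimate to the maximal-function bound; you have written out that step explicitly (the $(1+|s|)(1+|W-s|)\ge 1+|W|$ peeling followed by domination of the remaining integral by $Mg(\ell)$), which is exactly how that reference proceeds.
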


\begin{proof}
Since $f_{\ell,\nu}$ is supported in $B(\nu, 3R^{-1/2})$, 
multiplying by a harmless smooth bump function $\widetilde \chi$ supported in $B(0,5)$ and satisfying $\widetilde \chi=1$ on $B(0,3)$,
we may write
\[
E f_{\ell,\nu}(x,t) = \int K(x - z,t) \psi_{\ell}(z) \mathcal F^{-1} f_\nu(z) dz,
\]
where $
K(x,t) = \int e^{2 \pi i (x \cdot \xi + t \cdot \Phi(\xi))} \chi\big(R^{1/2}(\xi-\nu)\big) d\xi.
$
Changing variables $\xi\to  R^{-1/2}\xi +\nu$,
\[
K(x,t) = R^{-d/2} e^{2\pi i x \cdot \nu} \int e^{2 \pi i (R^{-1/2} x \cdot \xi  + t \cdot \Phi(R^{-1/2} \xi + \nu))} \chi(\xi) d \xi.
\]
Since $|t|\lesssim R$,  $
\nabla_\xi (R^{-1/2} x \cdot \xi + t \cdot \Phi(R^{-1/2} \xi + \nu)) = R^{-1/2} \big (x +  \sum_{j=1}^k t_j \nabla\varphi_j(\nu)\big)+O(1).
$ This follows by {Taylor's} expansion.  Hence, by repeated integration by parts we get
\[
| K(x,t) | \le C_N R^{-d/2} \Bpar 1 + R^{-1/2} \big| x+ \sum_{j=1}^k t_j \nabla\varphi_j(\nu) \big| \parB^{-N}.
\]
Once this is established, \eqref{wavepacketdecay} follows by a standard argument. See \cite{L} for the details.
\end{proof}

From the above lemma we see that $Ef_{\ell,\nu}$ is essentially supported on
\begin{equation} \label{def:tube}
\pi_{\ell,\nu} =   \pi_{\nu} +(\ell, 0) .
\end{equation}
If $\pi=\pi_{\ell,\nu}$, we define $\nu(\pi)=\nu$, which may be considered as  the (generalized) direction of $\pi$.

\medskip

The following is the main lemma of this section.

\begin{lemma} \label{packetdecomp} Let $R\gg 1$. Then,  $Ef$ can be rewritten as
\begin{equation} \label{eqn:WPD'}
Ef(x,t) = \sum_{ (\ell, \nu) \in \mathcal L\times \mathcal V} c_{\ell, \nu}  P_{\ell, \nu}(x,t)
\end{equation}
and  $c_{\ell, \nu} $, $P_{\ell, \nu}$ satisfy the following:

\begin{enumerate}
\item[$(i)$]
 $\mathcal F (P_{{\ell,\nu}}(\cdot, t))$ is supported in the disc $D(\nu, CR^{-1/2})$.

\smallskip

\item[$(ii)$]
If $|t| \lesssim R$, then for any $N \ge 0$
\begin{equation*}
| {P_{\ell,\nu}}(x,t)| \le C_N R^{-d/4} \Bpar 1 + R^{-1/2}\big| x-\ell + \bpar \sum_{j=1}^k t_j \nabla \varphi_j (\nu) \parb\big|\parB^{-N}.
\end{equation*}
\item[$(iii)$]  $\big( \sum_{(\ell,\nu )\in \mathcal L\times \mathcal V} |c_{\ell, \nu}|^2 \big)^{1/2} \lesssim
\|f\|_2.$

\smallskip

\item[$(iv)$]  If $|t| \lesssim R$, then  $\big\| \sum_{(\ell,\nu )\in \mathcal W} P_{\ell,\nu} (\cdot, t) \big\|_2^2 \lesssim \#\mathcal W$ for any $\mathcal W\subset \mathcal L\times \mathcal V$.
\end{enumerate}
\end{lemma}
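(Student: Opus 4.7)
The statement requires a representation $Ef = \sum_{(\ell,\nu)} c_{\ell,\nu} P_{\ell,\nu}$ satisfying $(i)$--$(iv)$, and the natural starting point is the decomposition (\ref{eqn:WPD}) already in hand. My plan is to set
\[ c_{\ell,\nu} := R^{d/4}\, M\bigl(\mathcal F^{-1}(\zeta_\nu f)\bigr)(\ell), \qquad P_{\ell,\nu} := Ef_{\ell,\nu}/c_{\ell,\nu} \]
(with $P_{\ell,\nu}\equiv 0$ when $c_{\ell,\nu}=0$), and then verify the four properties in turn. Property $(i)$ follows because $f_{\ell,\nu} = \widehat{\psi_\ell} * (\zeta_\nu f)$ is supported in $B(\nu, CR^{-1/2})$, so $\mathcal F_x\bigl(P_{\ell,\nu}(\cdot,t)\bigr)(\xi) = e^{2\pi i t\cdot \Phi(\xi)} f_{\ell,\nu}(\xi)/c_{\ell,\nu}$ has the required support; property $(ii)$ is immediate from Lemma \ref{lem:asy} after dividing through by $c_{\ell,\nu}$.

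For $(iii)$ I would exploit that $\mathcal F^{-1}(\zeta_\nu f)$ has Fourier support in $B(\nu,R^{-1/2})$, so by the uncertainty principle it is \emph{essentially constant} on $R^{1/2}$-scale cubes, and the Hardy--Littlewood maximal function inherits the same slow variation. This lets one convert the $R^{1/2}$-lattice sum into an integral,
\[ \sum_{\ell\in\mathcal L}\bigl|M(\mathcal F^{-1}(\zeta_\nu f))(\ell)\bigr|^2 \;\lesssim\; R^{-d/2}\bigl\|M(\mathcal F^{-1}(\zeta_\nu f))\bigr\|_{L^2}^2 \;\lesssim\; R^{-d/2}\|\zeta_\nu f\|_{L^2}^2, \]
the second step using the $L^2$-boundedness of $M$. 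Therefore $\sum_\ell |c_{\ell,\nu}|^2 \lesssim \|\zeta_\nu f\|_2^2$, and summing over $\nu$ (with $\sum_\nu|\zeta_\nu|^2\lesssim 1$) yields $(iii)$.

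The heart of the argument is $(iv)$, which I expect to be the main obstacle since it requires combining two distinct almost-orthogonalities. Fix $t$ with $|t|\lesssim R$ and, writing $\mathcal W_\nu := \{\ell : (\ell,\nu)\in\mathcal W\}$, split
\[ G := \sum_{(\ell,\nu)\in\mathcal W} P_{\ell,\nu}(\cdot,t) = \sum_{\nu}G_\nu, \qquad G_\nu := \sum_{\ell\in\mathcal W_\nu} P_{\ell,\nu}(\cdot,t). \]
By $(i)$, $\widehat{G_\nu}$ is supported in $D(\nu,CR^{-1/2})$; since these discs have uniformly bounded overlap as $\nu$ ranges over the $R^{-1/2}$-lattice $\mathcal V$, Plancherel combined with Cauchy--Schwarz yields the Fourier-side almost-orthogonality $\|G\|_2^2 \lesssim \sum_\nu \|G_\nu\|_2^2$. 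For each fixed $\nu$, by $(ii)$ the packet $P_{\ell,\nu}(\cdot,t)$ is concentrated in an $R^{1/2}$-ball around $\ell - \sum_j t_j\nabla\varphi_j(\nu)$, and these centers are $R^{1/2}$-separated as $\ell$ varies over $\mathcal L$; bounding the cross-terms $|\langle P_{\ell,\nu}(\cdot,t),P_{\ell',\nu}(\cdot,t)\rangle|$ from the rapid decay in $(ii)$ gives $\|G_\nu\|_2^2 \lesssim \sum_{\ell\in\mathcal W_\nu}\|P_{\ell,\nu}(\cdot,t)\|_2^2 \lesssim \#\mathcal W_\nu$, each individual $L^2$-norm being $\lesssim 1$ by integrating $(ii)$. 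Summing over $\nu$ yields $\|G\|_2^2\lesssim\#\mathcal W$, completing $(iv)$ and the proof; the bookkeeping is standard in the wave-packet literature (cf.\ \cite{L}, \cite{T}) once the two orthogonalities are clearly separated.
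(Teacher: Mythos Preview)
Your proof is correct and follows essentially the same approach as the paper: the same definitions $c_{\ell,\nu}=R^{d/4}M(\mathcal F^{-1}(\zeta_\nu f))(\ell)$ and $P_{\ell,\nu}=Ef_{\ell,\nu}/c_{\ell,\nu}$, the same use of Lemma~\ref{lem:asy} for $(ii)$, the slow-variation/Hardy--Littlewood argument for $(iii)$, and the two-step almost-orthogonality (Fourier-side in $\nu$, spatial in $\ell$) for $(iv)$. The paper's argument is slightly terser but there is no substantive difference.
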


\begin{proof}
We define $c_{{\ell,\nu}}$ and $P_{\ell,\nu} $ by
\[
 c_{{\ell,\nu}} = R^{d/4} M(\mathcal F^{-1} f_\nu)(\ell),   \quad
P_{{\ell,\nu}} (x,t)= c_{\ell,\nu}^{-1}  Ef_{\ell,\nu}(x,t)
\]
where $M$ denotes the Hardy-Littlewood maximal {function}.
Then we have \eqref{eqn:WPD'} from \eqref{eqn:WPD}.
 Since
$
Ef_{\ell,\nu}(\cdot,y)  = \mathcal F^{-1}(e^{2 \pi i y \Phi} f_{\ell,\nu})
$, $Ef_{\ell,\nu}(\cdot,y) $ has a Fourier support contained in $\supp f_{\ell,\nu}$, which is in turn contained in $D(\nu,CR^{-1/2})$. Thus $(i)$ follows and so does $(ii)$ from Lemma \ref{lem:asy}.

In order to show $(iii)$, note that
\begin{equation} \label{eqn:csum}
\sum_{(\ell,\nu )\in \mathcal L\times \mathcal V} |c_T|^2 =  R^{d/2} \sum_{(\ell,\nu)\in  \mathcal L\times \mathcal V}  M(\mathcal F^{-1} (\zeta_{\nu} f))(\ell) ^2.
\end{equation}
Since $\zeta_{\nu} f$ is supported on $B(\nu, CR^{1/2})$, $M(\mathcal F^{-1}  (\zeta_{\nu} f))(x) \sim M(\mathcal F^{-1} (\zeta_{\nu} f))(x')$ if $|x-x'| \lesssim R^{1/2}$. Hence, from the Hardy-Littlewood maximal theorem and the Plancherel theorem  we have that, for each $\nu$,
\begin{equation*}
R^{d/2} \sum_{(\ell,\nu) \in \mathcal L\times \mathcal V} |M(\mathcal F^{-1} (\zeta_{\nu} f))(\ell)|^2
\lesssim \int |M(\mathcal F^{-1}  (\zeta_{\nu} f))(x)|^2 dx \lesssim \|\zeta_{\nu} f\|_2^2.
\end{equation*}
 Combining this and \eqref{eqn:csum} we obtain
$
\sum_{{(\ell,\nu)} \in \mathcal L\times \mathcal V} |c_{{\ell,\nu}}|^2 \lesssim \sum_{\nu \in \mathcal V} \|\zeta_{\nu} f\|_2^2 \lesssim \|f\|_2^2,
$
and $(iii)$.

Finally, we consider  $(iv)$.  Since $\sum_{\ell: (\ell, \nu)\in \mathcal W } P_{\ell,\nu} (\cdot,t)$ is Fourier-supported in $D(\nu,CR^{-1/2})$, which have bounded overlap as $\nu$ varies over $\mathcal V$. By Plancherel's theorem,
\[
\Big\| \sum_{{(\ell, \nu)\in \mathcal W }}  P_{\ell,\nu} (\cdot, t) \Big\|_2^2 \lesssim \sum_{\nu \in \mathcal V} \Big\| \sum_{{\ell: (\ell, \nu)\in \mathcal W }} P_{{\ell, \nu}}(\cdot, t) \Big\|_2^2.
\]
From $(ii)$  it is easy to see that $\Big\| \sum_{{\ell: (\ell, \nu)\in \mathcal W }} P_{{\ell, \nu}}(\cdot, t) \Big\|_2^2\lesssim
\#\{\ell: (\ell, \nu)\in \mathcal W \} $. Hence, combining this with the above gives $(iv)$.
\end{proof}


\subsection{Dyadic pigeonholing and reduction}
From now on we will prove Proposition \ref{prop}. For simplicity we set
\[ p_0=\frac{d+3k}{d+k}. \]

By translation invariance we may assume that  $Q_R$ is centered at the origin.
Let
\[  \mathcal W_i\subset\{(\ell, \nu)\in \mathcal L\times \mathcal V:  \nu\in S_i +O(R^{-\frac12}) \},\,\, i=1,2.  \]
By Lemma \ref{packetdecomp} and the standard reduction with pigeonholing, which may only cause a loss of  $(\log R)^C$ (see \cites{L, T}), the matter is reduced to showing
\[
  \Big\| \sum_{\omega_1 \in  \mathcal W_1}  P_{\omega_1}
  \sum_{\omega_2  \in \mathcal W_2}  P_{\omega_2}
  \Big\|_{L^{p_0}(Q_R)} \lessapprox (R^{(1-\delta)\alpha} +
  R^{C\delta}) (\#\mathcal W_1\#\mathcal W_2)^\frac12,
\]
whenever $P_{\omega_1}$, $P_{\omega_2}$ {satisfy} $(i),(ii), (iv)$ in Lemma \ref{packetdecomp}.
Here $A\lessapprox B$ means $A\le C_\epsilon R^\epsilon B$ for any $\epsilon>0$.

By a further pigeonholing  argument we specify {the} associated quantities in dyadic scales.
Let $\mathcal  Q$ be {a} collection of almost disjoint cubes of the same sidelength $\sim R^{1/2}$, which cover $Q_R$.
For each $q \in \mathcal Q$ we define
\[
  \mathcal W_j(q) =\{ \omega_j \in \mathcal W_j: \pi_{\omega_j}\cap R^{\delta}q \neq
  \emptyset \}.
\]
For dyadic numbers $\rho_1, $ $\rho_2$  with $1 \le \rho_1, \rho_2 \le R^{100d}$,
we define
\begin{equation} \label{defQ}
  \mathcal Q(\rho_1, \rho_2) = \{q \in\mathcal Q: \rho_j \le \#\mathcal W_j(q) < 2 \rho_j,\ \ j=1,2\}.
\end{equation}
For  $\omega \in \mathcal W_1\cup \mathcal W_2$, we set
\begin{equation*}
  \lambda(\omega; \rho_1, \rho_2) = \#\{ q \in \mathcal Q(\rho_1,
  \rho_2): \pi_{\omega} \cap R^{\delta}q \neq \emptyset\}.
\end{equation*}
For dyadic numbers $1 \le \lambda \le
R^{100d}$ we define
\begin{equation} \label{Treduc}
  \mathcal W_j[\lambda;\rho_1, \rho_2] = \{ \omega_j \in \mathcal W_j:
  \lambda \le \lambda({\omega_j}; \rho_1, \rho_2) < 2\lambda \}, \,\, j=1,2.
\end{equation}
By a standard pigeonhole argument, it is sufficient to show
\begin{equation}
\label{redMa2}
\begin{aligned}
 & \Big(\sum_{q \in \mathcal Q(\rho_1, \rho_2)}
  \Big\| \sum_{\omega_1 \in  \mathcal W_1[\lambda_1;\rho_1, \rho_2]} P_{\omega_1}
  \sum_{\omega_2 \in  \mathcal W_2[\lambda_2;\rho_1, \rho_2]} P_{\omega_2}
  \Big\|^{p_0}_{L^{p_0}(q)} \Big)^{1/p_0} \\
&\qquad \qquad\lessapprox (R^{(1-\delta)\alpha} + R^{C\delta}) \,
 ( \#\mathcal W_1 \, \#\mathcal W_2)^{1/2}. \end{aligned}
\end{equation}

For the rest of the proof we assume that   $q\in \mathcal Q(\rho_1, \rho_2)$, $\omega_1 \in  \mathcal W_1[\lambda_1;\rho_1, \rho_2]$ and $\omega_2 \in  \mathcal W_2[\lambda_1;\rho_1, \rho_2]$ if it is not mentioned otherwise. So, the above sums are denoted simply by
$\sum_{q},$ $\sum_{\omega_1},$ and $\sum_{\omega_2}$, respectively.

\subsection{Induction argument}
For brevity let us put
\[
 \Delta = \bigcup_{q\in \mathcal Q(\rho_1, \rho_2)} q.  \]
Let $\{B\}$ be {a} collection of almost disjoint cubes of the same sidelength $R^{1-\delta}$, which cover $Q_R$. Then
\begin{equation}\label{redMa22}
 \text{  LHS of  \eqref{redMa2}} \le \sum_{B}
  \Big\| \sum_{\omega_1} P_{\omega_1}
  \sum_{\omega_2 } P_{\omega_2}
  \Big\|_{L^{p_0}(\Delta  \cap B)}.
\end{equation}
We define a relation $\sim$ between $\omega_1$ (or $\omega_2$) and the cubes in $\{B\}$. For each $\omega \in \mathcal W_1[\lambda_1;\rho_1,
\rho_2]\cup \mathcal W_2[\lambda_2;\rho_1,
\rho_2]$, we define $B^*(\omega) \in \{B\}$ to be the cube which maximizes the quantity
\begin{equation} \label{B*max}
  \#\{ q \in \mathcal Q(\rho_1, \rho_2):  \pi_\omega\cap R^\delta q \neq
  \emptyset; \ q \cap B \neq \emptyset \}.
\end{equation}
Then the relation $\sim$ is defined as follows:
\[  \omega\sim B \text{ if  }  B\cap 10 B^*(\omega) \neq \emptyset.  \]
Here $10 B^*(\omega)$ is the cube which has the same center as $B^*(\omega)$ and   sidelength  10 times as large as that of $B^*(\omega)$.
Using this relation we divide the sum into three parts to get
\begin{align}
&\sum_{B} \Big\| \sum_{\omega_1 } P_{\omega_1}
  \sum_{\omega_2 } P_{\omega_2}
  \Big\|_{L^{p_0}(\Delta  \cap B)} \nonumber
\le
 \sum_{B} \Big\| \sum_{\omega_1 : \omega_1 \sim B} P_{\omega_1}
  \sum_{\omega_2: \omega_2 \sim B} P_{\omega_2}
  \Big\|_{L^{p_0}(\Delta  \cap B)}\nonumber \\
+&
  \sum_{B}\Big\| \sum_{\omega_1 :\omega_1 \sim B } P_{\omega_1}
  \sum_{\omega_2: \omega_2 \nsim B} P_{\omega_2}
  \Big\|_{L^{p_0}(\Delta  \cap B)}+
  \sum_{B}\Big\| \sum_{\omega_1 :\omega_1 \nsim B} P_{\omega_1}
  \sum_{\omega_2} P_{\omega_2}
  \Big\|_{L^{p_0}(\Delta  \cap B)}\nonumber.
\end{align}

We will first show that
\begin{equation} \label{glPEst}
  \sum_{B}
  \Big\| \sum_{\omega_1 : \omega_1 \sim B} P_{\omega_1}
  \sum_{\omega_2: \omega_2 \sim B} P_{\omega_2}
  \Big\|_{L^{p_0} (\Delta  \cap B)}
  \lesssim  R^{(1-\delta)\alpha} (\#\mathcal W_1\#\mathcal W_2)^{1/2}.
\end{equation}
By applying the hypothesis \eqref{mainbi}, $(iv)$ in Lemma \ref{packetdecomp},  and the Cauchy-Schwarz inequality,
\begin{align*}
  \sum_{B}
  \Big\| &\sum_{\omega_1 : \omega_1 \sim B} P_{\omega_1}
  \sum_{\omega_2: \omega_2 \sim B} P_{\omega_2}
  \Big\|_{L^{p_0} (\Delta  \cap B)}  \le C
R^{(1-\delta)\alpha}
 \prod_{j=1}^2 \Big(\sum_{B}\#\{\omega_j : \omega_j \sim
  B\}\Big)^{1/2}\,.
\end{align*}
From the definition of the relation $\sim$ it is clear that $\#\{B : \omega_j \sim  B\} \le C$. Hence, for $j=1,2$
\begin{equation*}
  \sum_{B}\#\{\omega_j : \omega_j \sim  B\} = \sum_{\omega_j} \#\{B :
    \omega_j \sim  B\} \lesssim  W_j.
\end{equation*}
By inserting this into the previous inequality, we get \eqref{glPEst}.

\

Now, to prove \eqref{redMa2} it is enough to show
\[
  \Big\| \sum_{\omega_1 :\omega_1 \sim B } P_{\omega_1}
  \sum_{\omega_2: \omega_2 \nsim B} P_{\omega_2}
  \Big\|_{L^{p_0} (\Delta  \cap B)} \lessapprox R^{C\delta}  (\#\mathcal W_1 \, \#\mathcal W_2)^{1/2}
\]
and
\begin{equation} \label{MainP}
  \Big\| \sum_{\omega_1 :\omega_1 \nsim B} P_{\omega_1}
  \sum_{\omega_2} P_{\omega_2}
  \Big\|_{L^{p_0} (\Delta  \cap B)} \lessapprox R^{C\delta}  (\#\mathcal W_1 \, \#\mathcal W_2)^{1/2}.
\end{equation}
The proofs of these two estimates are similar. So, we will only prove
\eqref{MainP}.
By Plancherel's theorem, $\|E f(\cdot, t)\|_2\le  \|f\|_2$  for all $t\in \mathbb R^k$. Integration in $t$ gives
$\|E f\|_{L^2(Q_R)}\lesssim R^\frac k2 \|f\|_2$. By the Schwarz inequality it follows that
\[ \|E_1 f E_2 g\|_{L^1(Q_R)}\lesssim   R^k  \|f\|_2\|g\|_2. \]
Combining this with $(iv)$ in Lemma \ref{packetdecomp} yields
\begin{equation} \label{L1}
  \Big\| \sum_{\omega_1 :\omega_1 \nsim B} P_{\omega_1}
  \sum_{\omega_2} P_{\omega_2}
  \Big\|_{L^{1} (\Delta  \cap B)} \lesssim R^k  (\#\mathcal W_1\#\mathcal W_2)^{1/2}.
\end{equation}
Hence, the \eqref{MainP} follows from interpolation between \eqref{L1} and
\begin{equation} \label{L2}
  \Big\| \sum_{\omega_1 :\omega_1 \nsim B} P_{\omega_1}
  \sum_{\omega_2} P_{\omega_2}
  \Big\|_{L^{2} (\Delta  \cap B)} \lessapprox R^{C\delta}R^{-\frac{d-k}{4}}  (\#\mathcal W_1\#\mathcal W_2)^{1/2}.
\end{equation}
Now it remains to show the $L^2$-estimate \eqref{L2}.

\subsection{\texorpdfstring{$L^2$}{L2} estimate } To prove \eqref{L2} it suffices to show
\begin{equation} \label{L2Sec}
  \sum_{q \in \mathcal Q(\rho_1,\rho_2): q \subset 2B}
  \Big\| \sum_{\omega_1 :\omega_1 \nsim B} P_{\omega_1}
  \sum_{\omega_2} P_{\omega_2}
  \Big\|^2_{L^{2}(q)} \lessapprox R^{C\delta}R^{-(d-k)/2} \#\mathcal W_1 \#\mathcal W_2.
\end{equation}
For $j=1,2,$ let us set
\begin{gather*}
  \mathcal W_j(q) = \{ \omega_j \in \mathcal W_i[\lambda_j;\rho_1, \rho_2]:
  \omega_j \cap R^\delta q \neq  \emptyset \}, \quad
  \mathcal W_j^{\nsim B}(q) = \{ \omega_j \in \mathcal W_j(q): \omega_j \nsim
  B \}.
\end{gather*}
Then by $(ii)$ in Lemma \ref{packetdecomp} we may discard some harmless terms, whose contributions are $O(R^{-C\delta})$. Hence,  it suffices to show
\begin{equation} \label{L2Thi}
  \sum_{q \in \mathcal Q(\rho_1,\rho_2): q \subset 2B}
  \Big\| \sum_{\omega_1 \in  \mathcal W_1^{ \nsim B}(q)} P_{\omega_1}
  \sum_{\omega_2(q)} P_{\omega_2}
  \Big\|^2_2 \lessapprox R^{C\delta}R^{-(d-k)/2} \#\mathcal W_1 \,\#\mathcal W_2.
\end{equation}

By using Plancherel's theorem we write
\begin{align*} \label{innForm}
&\Big\| \sum_{\omega_1 \in  \mathcal W_1^{ \nsim B}(q)} P_{\omega_1}
\sum_{\omega_2\in \mathcal W_2(q)} P_{\omega_2}
\Big\|^2_2 \\
&\qquad \qquad =\sum_{\omega_1 \in  \mathcal W_1^{ \nsim B}(q)}
\sum_{\omega_2' \in \mathcal W_2(q)}
\sum_{\omega_1' \in \mathcal W_1^{ \nsim B}(q)}
\sum_{\omega_2\in \mathcal W_2(q)}
\Big\langle  \widehat P_{\omega_1} \ast \widehat P_{\omega_2},
\widehat  P_{\omega_1'} \ast \widehat P_{\omega_2'} \Big\rangle.
\end{align*}
Let us write
$\omega_j=(\ell_j, \nu_j),$ $\omega_j'=(\ell_j', \nu_j'),$ $j=1,2.$
For any $\nu_1 \in S_1$, $\nu_2' \in S_2$, we define $\mathcal W_1^{\nsim B}(q;\nu_1,\nu_2')$ by
\[\mathcal W_1^{\nsim B}(q;\nu_1,\nu_2')=\Big\{ \omega_1'=(\ell_1',\nu_1') \in \mathcal W_1^{ \nsim B}(q): \nu_1'\in \Pi_1^{\nu_1,\nu_2'}  +O(R^{-1/2})\Big\}.\]
Then $\widehat P_{\omega_1} \ast \widehat P_{\omega_2}$ is supported on the
$O(R^{-1/2})$-neighborhood of the point $(\nu_1+\nu_2,
\Phi(\nu_1)+\Phi(\nu_2))$. So the inner product $\Big\langle  \widehat
P_{\omega_1} \ast \widehat P_{\omega_2},
  \widehat P_{\omega_1'} \ast \widehat P_{\omega_2'} \Big\rangle$ vanishes
  unless
\begin{align*}
  \nu_1 + \nu_2 &= \nu_1' + \nu_2' + O(R^{-1/2}), \quad
  \Phi(\nu_1) + \Phi(\nu_2) = \Phi(\nu_1') + \Phi(\nu_2') +O(R^{-1/2}).
\end{align*}
Thus, for given $\nu_1$ and $\nu_2'$, we see that $\nu'_1$ is contained in a $O(R^{-1/2})$-neighborhood of $\Pi_1^{\nu_1,\nu_2'}$, which is defined by \eqref{piDef}. Once $\nu_1$, $\nu'_1$ and $\nu_2'$ are given, then there are only $O(1)$ many $\nu_2$, since $\nu_2$ should be in a $O(R^{-1/2})$-neighborhood of the point $\nu_1+ \nu_1'-\nu_2'$.  Therefore,
\[
\Big\| \sum_{\omega_1 \in  \mathcal W_1^{ \nsim B}(q)} P_{\omega_1}
  \sum_{\omega_2\in \mathcal W_2(q)} P_{\omega_2}
  \Big\|^2_2
  \lesssim R^{-(d-k)/2}
  \sum_{\omega_1 \in  \mathcal W_1^{ \nsim B}(q)}
  \sum_{\omega_2' \in \mathcal W_2(q)}
    \#\mathcal W_1^{\nsim B}(q;\nu_1,\nu_2'),
\]
where we also used 
\[
  \Big|\Big\langle  P_{\omega_1} \, P_{\omega_2},
  P_{\omega_1'} \, P_{\omega_2'} \Big\rangle \Big| \lesssim
 R^{-(d-k)/2}.
 \]
 This follows from  $(ii)$ in Lemma \ref{packetdecomp} and the transversality between $\pi_{\omega_1}$  ($\pi_{\omega_1'}$) and  $\pi_{\omega_2}$ {($\pi_{\omega_2'}$), respectively}. %
Hence,  \eqref{L2Thi} follows if we show
\begin{equation} \label{L2Four}
 \max_{q\subset 2B,\nu_1,\nu_2'} \#\mathcal W_1^{\nsim B}(q; \nu_1, \nu_2')  \sum_{q \in \mathcal Q(\rho_1,\rho_2): q \subset 2B}
  \#\mathcal W_1^{ \nsim B}(q)
  \#\mathcal W_2(q)
  \lesssim   R^{C\delta}\#\mathcal W_1\, \#\mathcal W_2.
\end{equation}

We will prove \eqref{L2Four}, assuming for the moment  that
\begin{equation} \label{finalE}
  \max_{q\subset 2B,\nu_1,\nu_2'} \#\mathcal W_1^{ \nsim B}(q; \nu_1, \nu_2') \lesssim R^{C\delta}
  \frac{\#\mathcal W_2}{\lambda_1
  \rho_2}.
\end{equation}
To this end it is enough to show
\[
\sum_{q \in \mathcal Q(\rho_1,\rho_2): q \subset 2B}
\#\mathcal W_1^{ \nsim B}(q)
\#\mathcal W_2(q) \lesssim \lambda_1 \rho_2 \#\mathcal W_1.
\]
Recalling $\#\mathcal W_2(q)\lesssim \rho_2$, we see that the left hand side is bounded by
\[
C\rho_2 \sum_{q \in \mathcal Q(\rho_1,\rho_2)} \#\mathcal W_1(q).
\]
Changing the order of summation, we see this in turn is bounded by $C\rho_2 \sum_{\omega_1} \#\{ q \in \mathcal Q(\rho_1,\rho_2): \pi_{w_1}\cap R^\delta q \}$.  Since  $\#\{ q \in \mathcal Q(\rho_1,\rho_2): \pi_{w_1}\cap R^\delta q \}\lesssim \lambda_1$, the desired inequality \eqref{L2Four} follows.

\subsection{Proof of (\ref{finalE})}
Fix $q \subset 2B$, $\nu_1 \in S_1$ and $\nu_2' \in S_2$. Let us
consider the set
\begin{align*}
  \mathbf S:=\Big\{(\widetilde q, \omega_1, \omega_2) \in \mathcal Q&( \rho_1, \rho_2) \times
  \mathcal W_1^{\nsim B}(q; \nu_1, \nu_2') \times \mathcal W_2 :
\\
 &\pi_{\omega_1} \cap R^{\delta}\widetilde q \neq
  \emptyset,\, \pi_{\omega_2} \cap R^{\delta}\widetilde q \neq
  \emptyset,\ \ \mathrm{dist}(\widetilde q, q) \ge R^{1-\delta} \Big \}.
\end{align*}
To prove \eqref{finalE} it suffices to show
\begin{align}
 R^{-C\delta} \lambda_1 \rho_2 \# \mathcal
  W_1^{ \nsim B}(q; \nu_1, \nu_2'))
 \lesssim   \#\mathbf S   \lesssim
   R^{C\delta} \#\mathcal W_2 \label{LUB}.
\end{align}
For the  lower bound  it is enough to show that,
for each  $\omega_1 \in  \mathcal W_1^{\nsim
B}(q; \nu_1, \nu_2')$,
\[
  \#\{ (\widetilde q, \omega_2) \in \mathcal Q(\rho_1,\rho_2) \times \mathcal W_2 :\,\pi_{\omega_1} \cap R^{\delta}\widetilde q \neq
  \emptyset,\,\pi_{\omega_2} \cap R^{\delta}\widetilde q \neq
  \emptyset,\, \mathrm{dist}(\widetilde q,q) \ge R^{1-\delta} \} \ge R^{-C\delta}
  \lambda_1 \rho_2.
\]

By  \eqref{Treduc} $\omega_1$ contains as many as  $O(
\lambda_1)$ cubes $\widetilde q$ in $\mathcal Q(\rho_1,\rho_2)$.\footnote{Recall that  we are assume assuming   $q\in \mathcal Q(\rho_1, \rho_2)$, $\omega_1 \in  \mathcal W_1[\lambda_1;\rho_1, \rho_2]$ and $\omega_2 \in  \mathcal W_2[\lambda_1;\rho_1, \rho_2]$.}
Let $B^*(\omega_1) \in \mathcal Q$ be the cube which maximizes the quantity given by
\eqref{B*max} with $\omega=\omega_1$. Since $\omega_1 \nsim B$,
it follows from
the definition of the relation $\sim$ that  $\mathrm{dist}(B^*(\omega_1),B) \gtrsim  R^{1-\delta}$.
Since $\pi_{\omega_1} +O(R^{\frac12+\delta})$ can be covered by $R^{C\delta}$ cubes $B$,  by a simple pigeonholing argument we get
\[
  \#\{ \widetilde q \in \mathcal Q(\rho_1,\rho_2) :~\pi_{\omega_1} \cap R^{\delta}\widetilde q \neq
  \emptyset,~ \mathrm{dist}(\widetilde q,q) \ge   R^{1-\delta} \}
  \gtrsim R^{-C\delta}  \lambda_1.
\]

Next, for the upper bound it suffices to show that, for any $\omega_2 \in
\mathcal W_2$,
\begin{equation}
\label{upper}
\begin{aligned}
\# \{(\widetilde q,\omega_1) \in \mathcal Q( \rho_1, \rho_2) \times
  &\mathcal W_1^{ \nsim B}(q; \nu_1, \nu_2') :
  \pi_{\omega_1} \cap R^{\delta}\widetilde q \neq
  \emptyset, \\ & \,\pi_{\omega_2} \cap R^{\delta}\widetilde q \neq
  \emptyset,\ \ \mathrm{dist}(\widetilde q,q) \gtrsim R^{1-\delta} \}
  \lesssim  R^{C\delta}.
\end{aligned}
\end{equation}
Let $\mathbf z_0$ be the center of $q$. Then,  by the definition of
$\mathcal W_1^{\nsim B}(q_0; \nu_1, \nu_2')$,  it follows that
\[
\bigcup_{\omega_1 \in  \mathcal W_1^{\nsim B}(q; \nu_1, \nu_2')}  \pi_{\omega_1} \subset \Gamma_1^{\nu_1,\nu_2'}( CR^{\frac12+\delta}) +\mathbf z_0.
\]
If  $\omega_2 \in  \mathcal W_2$, then it follows from Lemma \ref{platetrans} that the intersection
\[
\pi_{\omega_2} \cap \Big(\bigcup_{\omega_1 \in  \mathcal W_1^{\nsim B}(q; \nu_1, \nu_2')}  \pi_{\omega_1} \Big)
\] is contained in a cube  of sidelength $O(R^{1/2+\delta})$.
Thus,  there are at most $O(R^{C\delta})$ choices of
balls $\widetilde q \in \mathcal Q(\rho_1,\rho_2)$ such that  $(\widetilde q,\omega_1)$ is contained in the set in \eqref{upper}.
On the other hand, since
$\mathrm{dist}(\widetilde q,q) \gtrsim R^{1-C\delta}$,  we have
\begin{equation} \label{distantcubes}
\#\{ w_1\in \mathcal W_1^{\nsim B}(q; \nu_1, \nu_2') :    \pi_{\omega_1} \cap R^{\delta} \widetilde q \neq \emptyset, \quad  \pi_{\omega_1} \cap
R^{\delta}q\neq \emptyset    \} \lesssim  R^{C\delta}.
\end{equation}
To see this, by scaling it is enough to check that the map $  S_1\ni \nu: \to \sum_{i=1}^k t_j \nabla\varphi_i(\nu)$ is {one-to-one} whenever $|t|=1$.  But this follows from the condition \eqref{hessianinvertible} if we take $S_1$ to be small enough.
 Thus we obtain the claim \eqref{upper}. Hence, we also have \eqref{redMa2}, which finishes the proof of Proposition \ref{prop}. This completes the proof of  Theorem \ref{thm}.
\qed

\begin{proof} [Proof of Theorem \ref{cor2}]  Thanks to Lemma  \ref{platetrans2}, the line  of argument in the proof of Theorem \ref{thm} works without modification except that we need to show \eqref{distantcubes}. However, to prove  \eqref{distantcubes} we don't need to show  $S_1\ni \nu \mapsto \sum_{i=1}^k t_j \nabla\varphi_i(\nu)$ is one-to-one. Instead, as is clear after rescaling
it is enough to show that $  \Pi^{\nu_1, \nu_2'}\ni \nu \mapsto \sum_{i=1}^k t_j \nabla\varphi_i(\nu)$ is one-to-one.  Let $\mathbf t_1, \dots, \mathbf t_{d-k}$ be a set of vectors spanning  the tangent space of $\Pi^{\nu_1, \nu_2'}$ at $\nu_0$.  Then the above follows if we  show that the matrix
\[ ( \mathbf t_1^t ,  \dots, \mathbf t_{d-k}^t) \Big( \sum_{i=1}^k t_j H\varphi_i(\nu_0) \Big) \]
has rank $d-k$ for $|t|=1$.  In fact, $\mathbf t_1, \dots, \mathbf t_{d-k}$  are almost normal to the span of $\{\nabla \varphi_i(\nu_2) -  \nabla \varphi_i(\nu_1): i=1,\dots, k\}$.  These vectors are close to  $\mathbf n_1, \dots \mathbf n_{d-k}$.
Hence, assuming that $S_1$ and $S_2$ are small enough, the above follows if we show
$\mathbf N(\nu_2, \nu_1) \sum_{i=1}^k t_j H\varphi_i(\nu_0)$ has rank $d-k$. This clearly follows from  \eqref{normalcondition}.
\end{proof}


\section{Restriction  estimates for complex surfaces}

In this section we provide the proofs of Corollary \ref{prop:Gercomp} and Theorem \ref{thm:cpl}.  In what follows we set $k=2$, $d=2n$.

\begin{proof}[Proof of Corollary \ref{prop:Gercomp}]
\newcommand{\Rp}{\Re^\ast}
Let $\varphi_1$, $\varphi_2$ be given by
$\frac12 z^t D z=\varphi_1+ i\varphi_2$ so that
\[  \varphi_1(x,y)= \frac12  \big(x^t D x- y^t Dy),\,\,\, \varphi_2(x,y)= x^t Dy,\quad (x,y)\in \mathbb R^n\times \mathbb R^n.\]
In order to prove Corollary \ref{prop:Gercomp}  we need only to show that the condition \eqref{ssc} implies
the assumptions in Theorem \ref{thm}.

Let us set $z_j=x_j+i y_j\in \mathbb C^{n}$ for $j=1,2$,    $\delta_x=x_2- x_1$,  and $\delta_y=y_2- y_1$.
Then a computation shows that  the associated matrix  $\mathbf M(t, z_1, z_2, z)$ is given by
\[\mathbf M(t, z_1, z_2, z)=
\begin{pmatrix}
 &0&0& \delta_x^t D& -\!\!\!\!\!\!&\delta_y^t D
 \\
& 0&0& \delta_y^t D&   &\delta_x^t D
 \\
&D \delta_x& D \delta_y& t_1D&  &t_2 D
\\
-\!\!\!\!\!\!&D\delta_y & D \delta_x&   t_2 D&   -\!\!\!\!\!\! &t_1D
\end{pmatrix}\,.
\]
Note that
\[  \sum_{j=1}^2 t_j H \varphi_j =  \begin{pmatrix}
t_1D& t_2 D
\\
 t_2 D&   -t_1D
\end{pmatrix}. \]
Then, it is easy to see that the inverse of $\sum_{j=1}^2 t_j H \varphi_j $  is
$ {(t_1^2+t_2^2)^{-1}} \begin{pmatrix}  t_1 D^{-1}& t_2 D^{-1} \\  t_2 D^{-1}&   \,\,-t_1D^{-1} \end{pmatrix}$.
So, the assumption \eqref{hessianinvertible} holds. Hence, it suffices to show that   \eqref{ssc} implies \eqref{hessiannonvanishing}. By the block matrix formula
we only need  to check
\[
\det  \left[
 \begin{pmatrix} \delta_x^t D& -\delta_y^t D
 \\
\delta_y^t D&  \delta_x^t D
 \end{pmatrix}
\begin{pmatrix}
 t_1 D^{-1}& t_2 D^{-1}
\\
 t_2 D^{-1}&   -t_1D^{-1}
\end{pmatrix}
\begin{pmatrix} D \delta_x& D \delta_y
\\
-D\delta_y & D \delta_x \end{pmatrix} \right] \neq 0.\]
By a direct computation it is not difficult to see  that the left-hand side equals
\[ -(t_1^2+ t_2^2)\big( \big( \delta_x^tD\delta_x-\delta_y^t D\delta_y\big)^2+ 4 \big(\delta_x^tD\delta_y\big)^2 \,  \big)
 \footnote{In fact, the product of the three matrices  is equal to  $
 \begin{pmatrix}
t_1& -t_2
\\
 t_2 &  t_1
 \end{pmatrix}
 \begin{pmatrix}
\delta_x^tD\delta_x-\delta_y^t D\delta_y & 2 \delta_x^tD\delta_y
\\
2 \delta_x^tD\delta_y &  \delta_y^t D\delta_y-\delta_x^tD\delta_x
\end{pmatrix} .$ } \]
Since $(z_2-z_1)^t D(z_2-z_1)= \delta_x^tD\delta_x-\delta_y^t D\delta_y+2i \delta_x^tD\delta_y$, it is now clear that   \eqref{ssc} implies \eqref{hessiannonvanishing}.
\end{proof}

\begin{proof}[Proof of Theorem \ref{thm:cpl}] From the bilinear estimate we can get the linear estimate by adapting the arguments in \cites{TVV, V, L}.
Since $D$ is nonsingular and symmetric, {{} by making use of linear transforms we may assume that
\[  D = \begin{pmatrix}
1&0 \\
0& \pm 1
\end{pmatrix}, \]
and so we have either $\Phi(z_1,z_2) = z_1^2 + z_2^2 = (z_1+ i z_2)(z_1- i z_2)$ or
$\Phi(z_1,z_2) = z_1^2 - z_2^2 = (z_1+ z_2)(z_1- z_2)$.
By a linear change of variables {the problem can be} further reduced to showing Proposition \ref{thm:cpl} when $\Phi(z_1,z_2) = z_1z_2$.}

The following is an immediate consequence of Theorem \ref{thm} and the translation invariance of the bilinear estimate.

\begin{lemma} \label{lem:bilAA}
Let $\Phi(z_1,z_2) = z_1 z_2$  and  $Q_1, Q_2\subset \bbC^2$ be closed cubes.
Assume that %
\[  2^4\ge |z_1 - w_1| \ge 2^{-1} , \text{ and }  2^4\ge |z_2 - w_2| \ge 2^{-1}
\]
whenever $(z_1, z_2) \in Q_1$ and $(w_1, w_2) \in Q_2$.
If $\supp(f) \subset Q_1$ and $\supp(g) \subset Q_2$, then  for $q > \frac{10}{3}$ and $\frac{1}{p} + \frac{5}{3q} <1$
\[ \|  Ef \,  Eg \|_{q/2} \le C_{p,q}\, \| f\|_p \| g\|_p.
\]
\end{lemma}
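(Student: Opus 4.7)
The plan is to deduce Lemma \ref{lem:bilAA} directly from Corollary \ref{prop:Gercomp} applied with $n = 2$ and a suitable choice of $D$. Observe that the quadratic form $\Phi(z_1, z_2) = z_1 z_2$ can be written as $\Phi(z) = \tfrac12 z^t D z$ with
\[
D = \begin{pmatrix} 0 & 1 \\ 1 & 0 \end{pmatrix},
\]
which is a real, symmetric, nonsingular matrix. Thus the extension operator $E$ appearing in the statement is the operator $E_\gamma$ of Corollary \ref{prop:Gercomp} for this choice of $D$.

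Next I would verify the hypothesis \eqref{ssc} of Corollary \ref{prop:Gercomp} uniformly on $Q_1 \times Q_2$. A direct computation yields
\[
(z - w)^t D (z - w) \;=\; 2(z_1 - w_1)(z_2 - w_2)
\]
for any $z = (z_1, z_2) \in Q_1$ and $w = (w_1, w_2) \in Q_2$. The separation hypothesis $|z_i - w_i| \ge 2^{-1}$ then forces $|(z - w)^t D (z - w)| \ge 1/2 > 0$ uniformly, so \eqref{ssc} holds. The complementary upper bound $|z_i - w_i| \le 2^4$ ensures that the cubes $Q_1, Q_2$ have bounded diameter, which, together with the uniform lower bound just obtained, allows the proof of Corollary \ref{prop:Gercomp} (via Theorem \ref{thm} and Remark \ref{uniform}) to produce a constant depending only on $p$ and $q$. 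Translation invariance of the bilinear norm $\|Ef\,Eg\|_{L^r}$ under shifts of the supports of $f$ and $g$ (which only produce modulations of $Ef,Eg$) guarantees that this constant is insensitive to the particular location of $Q_1, Q_2$ in $\mathbb{C}^2$.

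Finally, the exponents are matched by the substitution $Q = q/2$: the hypotheses $q > 10/3$ and $\tfrac1p + \tfrac{5}{3q} < 1$ become $Q > \tfrac{5}{3} = \tfrac{n+3}{n+1}$ and $\tfrac1p + \tfrac{n+3}{n+1}\cdot\tfrac{1}{2Q} < 1$, which are exactly the conditions appearing in Corollary \ref{prop:Gercomp} with $n = 2$. Its conclusion $\|E_\gamma f \, E_\gamma g\|_{L^Q(\mathbb{C}^3)} \le C_{p,q}\|f\|_p \|g\|_p$ then reads $\|E f \, E g\|_{q/2} \le C_{p,q}\|f\|_p \|g\|_p$, as desired. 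There is no genuine analytic obstacle here, since the heavy lifting has already been carried out in the proofs of Theorem \ref{thm} and Corollary \ref{prop:Gercomp}; the only care required is the bookkeeping to identify $\Phi(z_1,z_2)=z_1z_2$ with the form $\tfrac12 z^t D z$, to translate the separation hypothesis into \eqref{ssc}, and to match the exponents.
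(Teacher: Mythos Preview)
Your proof is correct and follows essentially the same approach as the paper. The paper simply asserts that the lemma is ``an immediate consequence of Theorem \ref{thm} and the translation invariance of the bilinear estimate''; you have spelled this out by routing through Corollary \ref{prop:Gercomp} with $D=\begin{pmatrix}0&1\\1&0\end{pmatrix}$, which is exactly the specialization of Theorem \ref{thm} to complex quadrics and hence the natural way to carry out the paper's one-line remark.
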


In the next lemma the hypothesis of `nonvanishing rotational curvature' is weakened to the usual separation condition. But then, for the conclusion to hold, the pair $(1/p, 1/q)$ needs to satisfy a more restrictive condition. This lemma is an analog of Proposition 4.1 in \cite{L}. 

\begin{lemma} \label{separationonly} Let $Q_1$, $Q_2$ be closed cubes  in $\bbC^2$ such that  $\dist(Q_1, Q_2) \ge  1$. If $\supp(f) \subset Q_1$ and $\supp(g) \subset Q_2$, then there is a constant $C_{p,q}$ such that
\[ \| Ef \, Eg \|_{q/2} \le C_{p,q}\, \| f\|_p \| g\|_p
\]
if $\frac{1}{p} + \frac{2}{q} < 1$, $q > \frac{10}{3}$, or $\|  E \chi_F\, E\chi_G \|_{q/2} \lc \| f\|_{p,1} \| g\|_{p,1}$
if $\frac{1}{p} + \frac{2}{q} = 1$, $q > \frac{10}{3}$.\end{lemma}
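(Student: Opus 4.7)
The plan is to reduce Lemma~\ref{separationonly} to Lemma~\ref{lem:bilAA} via a Whitney-type decomposition of $Q_1 \times Q_2$ that exploits the anisotropic scaling symmetry of the surface $\Phi(z_1,z_2) = z_1 z_2$, following the strategy developed in \cite{L, V} for the analogous problem on the real hyperbolic paraboloid in $\bbR^3$. The hypothesis $\dist(Q_1, Q_2) \ge 1$ only guarantees $\max(|z_1-w_1|, |z_2-w_2|) \gtrsim 1$ on $Q_1 \times Q_2$, so one of the coordinate separations may be much smaller than $1$, in which case Lemma~\ref{lem:bilAA} does not apply directly.

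First I would decompose $f$ and $g$ coordinate-wise. For dyadic parameters $j, k \ge 0$, perform a Whitney decomposition of $\pi_1(Q_1) \times \pi_1(Q_2)$ into pairs of complex subcubes of sidelength $\sim 2^{-j}$ and separation $\sim 2^{-j}$, and similarly decompose $\pi_2(Q_1) \times \pi_2(Q_2)$ at scale $2^{-k}$. The induced decomposition $f = \sum_{j,k,\vec\alpha} f_{j,k,\vec\alpha}$, and the analogous decomposition of $g$, are such that the hypothesis $\dist(Q_1, Q_2) \ge 1$ forces only pairs with $\min(j,k) \le C_0$ to contribute. On each paired piece apply the anisotropic rescaling $(z_1, z_2) \mapsto (2^{-j}\tilde z_1, 2^{-k}\tilde z_2)$, absorbed in the frequency variable by $(w_1, w_2, w_3) \mapsto (2^j w_1, 2^k w_2, 2^{j+k} w_3)$, to bring the pieces into the setting of Lemma~\ref{lem:bilAA}. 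Tracking Jacobians gives
\[
\| E f_{j,k,\vec\alpha}\, E g_{j,k,\vec\alpha'} \|_{q/2} \le C_{p,q}\, 2^{-4(1-\frac{1}{p}-\frac{2}{q})(j+k)} \|f_{j,k,\vec\alpha}\|_p \|g_{j,k,\vec\alpha'}\|_p,
\]
whose exponent is strictly negative exactly when $\frac{1}{p} + \frac{2}{q} < 1$.

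The pieces are assembled in two stages. At each fixed scale $(j, k)$ the Fourier supports of the bilinear products $E f_{j,k,\vec\alpha} \, E g_{j,k,\vec\alpha'}$ cluster near distinct regions of $\gamma(Q_1^{j,k,\vec\alpha}) + \gamma(Q_2^{j,k,\vec\alpha'}) \subset \bbC^3$ and are boundedly overlapping as $(\vec\alpha, \vec\alpha')$ range over the Whitney pairs. A standard Plancherel-type orthogonality argument, combined with the $\ell^p$-disjointness of the spatial supports (so that $\sum_{\vec\alpha} \|f_{j,k,\vec\alpha}\|_p^p = \|f\|_p^p$) and the nesting $\ell^p \hookrightarrow \ell^2$ valid for $p \le 2$, bounds the same-scale sum by $\|f\|_p \|g\|_p$. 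The geometric decay $2^{-\delta(j+k)}$ with $\delta = 4(1 - 1/p - 2/q) > 0$ then permits summation over $(j, k)$ via a convergent geometric series. The endpoint case $\frac{1}{p} + \frac{2}{q} = 1$ is handled by Bourgain's summation trick at the level of real interpolation, which upgrades the strong-type bound on each scale to the restricted weak-type estimate against $L^{p,1}$. The main technical obstacle is justifying the near-orthogonality of the Fourier supports across all Whitney pieces uniformly in the scale parameters, which requires careful use of the explicit quadratic structure of $\Phi(z) = z_1 z_2$ and the transversality information that underlies Lemma~\ref{lem:bilAA}.
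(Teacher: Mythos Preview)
Your approach is essentially the same as the paper's: Whitney decomposition plus the anisotropic rescaling of $\Phi(z_1,z_2)=z_1z_2$ to feed into Lemma~\ref{lem:bilAA}, then almost-orthogonality (Lemma~6.1 of \cite{TVV}) at each scale and geometric summation, with Bourgain's trick for the endpoint. The paper first reduces (by a finite decomposition of $Q_1,Q_2$) to the situation $Q_1=H_1\times K$, $Q_2=H_2\times K$ with $\dist(H_1,H_2)\ge 2^{-1}$, so that only a \emph{single} Whitney parameter (in the second coordinate) is needed; your two-parameter decomposition together with the observation $\min(j,k)\le C_0$ collapses to the same thing.

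One small correction: the nesting you invoke to pass from the $\ell^{q/2}$ sum coming out of the orthogonality lemma to the $\ell^p$ sum is $\ell^p\hookrightarrow\ell^q$ (after Cauchy--Schwarz), valid for $p\le q$, not $\ell^p\hookrightarrow\ell^2$ with $p\le 2$; in fact on the critical line $\frac1p+\frac2q=1$ with $\frac{10}{3}<q\le 4$ one has $p>2$, so the latter would fail. The paper handles this by first noting that by interpolation with the trivial $L^1\times L^1\to L^\infty$ bound it suffices to treat $p\le q$.
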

By translation it is clear that in Lemma \ref{lem:bilAA} and Lemma \ref{separationonly} the same estimate holds with $Q_1$, $Q_2$  replaced by  $Q_1+a$, $Q_2+a$, respectively, for any $a\in \mathbb C^2$.
It is possible to prove the strong-type estimate $\|  E \chi_F\, E\chi_G \|_{q/2} \lc \| f\|_{p} \| g\|_{p}$ for $\frac{1}{p} + \frac{2}{q} = 1$, $q > \frac{10}{3}$ by making use of the asymmetric  estimates  which are obtained in the course of proof of Proposition \ref{prop:endEst} and the bilinear interpolation (see e.g. \cite{BeL}*{Sec. 3.13, 5(b)}). However, we have decided not to include the details here, because it does not seem to have any consequence{s} for linear estimates.

\begin{proof}[Proof of Lemma \ref{separationonly}] By interpolation it suffices to consider the case $\frac{10}3< q\le 4$ and $p\le q$. By decomposition of the domains,
followed by translation and scaling, we may assume that
$Q_1 = H_1 \times K$ and $Q_2 = H_2 \times K$, where
$\dist(H_1, H_2) \ge 2^{-1}$ and $K$ is the unit cube in $\bbC$, centered at the origin.

By a Whitney decomposition, we get
\[ (K \times K) \setminus D = \bigcup_{j>1} \bigcup_{ (k, k'):  I^j_{k} \sim I^j_{k'} }  I^j_{k} \times I^j_{k'}
\]
where $D = \{ (z_2, w_2): ~ z_2 = w_2 \}$, and $\{I^j_k\}_{k}$ are the dyadic cubes in $\bbC$ of sidelength $2^{-j}$, and as usual the notation $ I^j_{k}\sim  I^j_{k' }$ means that the parent cubes of $ I^j_{k}$ and $ I^j_{k'} $ are adjacent, while $ I^j_{k}$ and $ I^j_{k'}$ are not.

Let us set
\[ f^j_k (z_1, z_2) = \chi_{I^j_k} (z_2) f (z_1, z_2), \quad g^j_k(w_1, w_2) = \chi_{I^j_k} (w_2) g (w_1, w_2).\]
Then, since the cubes $I^j_{k} \times I^j_{k'} $ are almost disjoint, we may write
\[ Ef\, Eg = \sum_j \sum_{(k, k'):  I^j_{k} \sim I^j_{k'} } E(f^j_k)\, E(g^j_{k'}) .
\]
Since $ q> \frac{10}{3}$,   we get
\begin{align*}
\| Ef\, Eg \|_{q/2} &\le  \sum_j \| \sum_{I^j_{k} \sim I^j_{k'} } E(f^j_k) \, E(g^j_{k'}) \|_{q/2}
  \lc  \sum_j \Big( \sum_{I^j_{k} \sim I^j_{k'} } \| E(f^j_k) \, E(g^j_{k'}) \|_{q/2}^{q/2} \Big)^{2/q},
\end{align*}
where the last inequality follows from Lemma 6.1 in \cite{TVV}. Here, we used the fact that for each fixed $j$ the supports of the Fourier transforms of  ${E(f^j_k) \, E(g^j_{k'})}$ have uniformly bounded overlap as $(k, k')$ varies, provided that  $I^j_{k} \sim I^j_{k'}$. This  is a consequence of  the Whitney decomposition.
We now claim that
if  $I^j_{k} \sim I^j_{k'}$, then
\begin{equation}\label{scaled}
\| E(f^j_k) \, E(g^j_{k'}) \|_{q/2} \lc 2^{4j \big(\frac{1}{p}+\frac{2}{q}-1 \big)} \| f^j_k\|_p \, \| g^j_{k'} \|_{p}
\end{equation}
 when $\frac{1}{p} + \frac{5}{3q} <1$, $ q> \frac{10}{3}$.   This is  an easy consequence of a translated version of Lemma \ref{lem:bilAA}.  Assuming this for the moment, we will finish the proof.
Since $q\ge p$,  for  $\frac{1}{p} + \frac{5}{3q} <1$, $4>q> \frac{10}{3}$, we have
\begin{align*}
&\qquad \| Ef\, Eg \|_{q/2} \le  \sum_j  2^{4j \big(\frac{1}{p}+\frac{2}{q}-1 \big)} \Big( \sum_{I^j_{k} \sim I^j_{k'} } \| f^j_k\|_p^{q/2}  \| g^j_{k'} \|_{p}^{q/2} \Big)^{2/q}
\\ & \lc  \sum_j  2^{4j \big(\frac{1}{p}+\frac{2}{q}-1 \big)} (\sum_{I^j} \| f^j_k\|_p^{p} )^{1/p}  (\sum_{J^j} \| g^j_{k'} \|_{p}^{p} )^{1/p}
 \lc  \sum_j  2^{4j \big(\frac{1}{p}+ \frac{2}{q}-1 \big)} \| f\|_p\,  \| g \|_{p}.
\end{align*}

Now take $f = \chi_F$ and $g = \chi_G$ for measurable sets $F$, $G$ contained in $V_1$, $V_2$, respectively.
Fix $p$, $q$ with $4>q > \frac{10}{3}$, $\frac{1}{p} + \frac{2}{q} = 1$, and choose $p_1$ and $p_2$  such that
$\frac{1}{p_j} + \frac{5}{3q} <1$, $j=1,2$, and
\[ \frac{1}{p_1} + \frac{2}{q} -1 = \eta, \quad  \frac{1}{p_2} + \frac{2}{q} -1 = - \eta
\]
for some small $\eta >0$. Then by applying the last estimate for $p = p_1$ and $p=p_2$,  we obtain
\begin{align*}
\|  E \chi_F\, E\chi_G \|_{q/2}  & \lc \sum_j \min \{ 2^{4j\eta} |F|^{\eta +1 - 2/q} |G|^{\eta +1 - 2/q}, ~ 2^{-4j\eta} |F|^{-\eta +1 - 2/q} |G|^{-\eta +1 - 2/q} \}
\\
&\lc |F|^{1-2/q} |G|^{1-2/q} = |F|^{1/p} |G|^{1/p}.
\end{align*}
This shows the estimate $\|  E \chi_F\, E\chi_G \|_{q/2} \lc \| f\|_{p,1} \| g\|_{p,1}$
for $\frac{1}{p} + \frac{2}{q} = 1$, $4>q> \frac{10}{3}$.

\

Now it remains to show \eqref{scaled}.  Clearly, $I^j_{k}$ and $ I^j_{k'}$ are contained in a ball of radius $2^{2-j}$ and $\dist(I^j_{k}, I^j_{k'})\ge 2^{-1-j}$. Hence, by {a change of variables,}
\begin{align*}
E(f^j_k) (w) &= 2^{-2j} \, E ( f^j_k (\cdot, 2^{-j}\,\cdot )) (w_1 ,  2^{-j} w_2,  2^{-j}  w_3),  \\
 E(g^j_{k'}) (w) &= 2^{-2j} \, E ( g^j_{k'} (\cdot, 2^{-j} \, \cdot )) (w_1 ,  2^{-j}  w_2, 2^{-j}   w_3).\end{align*}
Then we see that $\supp f^j_k (\cdot, 2^{-j}\,\cdot )\subset H_1\times \widetilde  I_1$ and
$g^j_{k'} (\cdot, 2^{-j} \, \cdot )\subset H_2\times \widetilde  I_2$ if $ \dist(\widetilde  I_1, \widetilde  I_2)\ge 2^{-1}$ and $\widetilde  I_1, \widetilde  I_2$ are contained in a ball of radius $\le 2^{3}$.  The assumption of Lemma \ref{lem:bilAA}  is satisfied with $f= f^j_k (\cdot, 2^{-j}\,\cdot )$ and $g=g^j_{k'} (\cdot, 2^{-j} \, \cdot )$.   Hence   we may apply it to  $E ( f^j_k (\cdot, 2^{-j}\,\cdot )) E ( f^j_k (\cdot, 2^{-j}\,\cdot ))$ and get \eqref{scaled}.
This completes the proof.
\end{proof}

Once Lemma \ref{separationonly} is established, the usual argument in \cite{TVV}, used to deduce linear estimates from bilinear ones, works without modification. We omit the details.
\end{proof}


\medskip

\noindent{\bf Acknowledgements.}
We would like to thank Andreas Seeger and the anonymous referee  for bringing the reference \cite{A} and \cite{B} to our attention.


\end{document}